\newcommand{\uzero}{u_0}
\newcommand{\R}{\mathbb{R}}
\newcommand{\C}{\mathbb{C}}
\newcommand{\thickhline}{%
	\noalign {\ifnum 0=`}\fi \hrule height 1pt
	\futurelet \reserved@a \@xhline
}
\newcolumntype{"}{@{\hskip\tabcolsep\vrule width 1pt\hskip\tabcolsep}}
\newcommand{\ci}{\mathrm{i}} 
\newcommand{\deltat}[1]{\tau_{#1}}
\newcommand{\hatu}[1]{u_{\deltat{}}^{#1}}
\newtheorem{theorem}{Theorem}[section]
\newtheorem{lemma}[theorem]{Lemma}
\newtheorem{remark}[theorem]{Remark}
\renewcommand{\thefootnote}{\fnsymbol{footnote}}
\renewcommand{\thefootnote}{\arabic{footnote}}
\begin{document}

\begin{center}
{\LARGE A note on optimal $H^1$-error estimates for Crank--Nicolson approximations to the nonlinear Schr\"odinger equation\renewcommand{\thefootnote}{\fnsymbol{footnote}}\setcounter{footnote}{0}
 \hspace{-3pt}\footnote{The authors acknowledge the support by the Swedish Research Council (grant 2016-03339) and the G\"oran Gustafsson foundation.}}\\[2em]
\end{center}

\begin{center}
{\large Patrick Henning and Johan W\"arneg{\aa}rd \footnote[1]{Department of Mathematics, KTH Royal Institute of Technology, SE-100 44 Stockholm, Sweden.}}\\[2em]
\end{center}

\begin{center}
{\large{\today}}
\end{center}

\begin{center}
\end{center}

\begin{abstract}
In this paper we consider a mass- and energy--conserving Crank-Nicolson time discretization for a general class of nonlinear Schr\"odinger equations. This scheme, which enjoys popularity in the physics community due to its conservation properties, was already subject to several analytical and numerical studies. However, a proof of optimal $L^{\infty}(H^1)$-error estimates is still open, both in the semi-discrete Hilbert space setting, as well as in fully-discrete finite element settings. This paper aims at closing this gap in the literature.  We also suggest a fixed point iteration to solve the arising nonlinear system of equations that makes the method easy to implement and efficient. This is illustrated by numerical experiments.
\end{abstract}
	
\paragraph*{AMS subject classifications}
35Q55, 65M60, 65M15, 81Q05

\section{Introduction}

In this paper we consider nonlinear Schr\"odinger equations (NLS) seeking a complex function $u(t,x)$ such that
$$
\ci \partial_t u = - \triangle u + V u + \gamma( |u|^2 ) u
$$
in a bounded domain $\mathcal{D} \subset \mathbb{R}^d$, with a homogenous Dirichlet boundary condition on $\partial\mathcal{D}$ and a given initial value. Here, $V(x)$ is a known real-valued potential and $\gamma : [0,\infty) \rightarrow \mathbb{R}$ is a smooth (and possibly nonlinear) function that depends on the unknown density $|u|^2$. Of particular interest are cubic nonlinearities of the form $\gamma(|u|^2)u=\kappa |u|^2u$, for some $\kappa \in \mathbb{R}$. In this case, the equation is called Gross--Pitaevskii equation. It has applications in optics \cite{Optics,GravityWaves}, fluid dynamics \cite{FluidReview,DeepWater} and, most importantly, in quantum physics, where it models for example the dynamics of Bose-Einstein condensates in a magnetic trapping potential \cite{Gro61,LSY01,Pit61}. Another relevant class is saturated nonlinearities, such as $\gamma(|u|^2)=\kappa |u|^2 (1 + \alpha |u|^2)^{-1}$ for some $\alpha\ge 0$, which appear in the context of nonlinear optical wave propagation in layered metallic structures \cite{MNF89,RTZ00} or the propagation of light beams in plasmas \cite{Max76}.
In order to discretize nonlinear Schr\"odinger equations in time, splitting methods and exponential integrators  typically yield highly efficient solution schemes that can be easily combined with a spectral discretization in space (cf. \cite{BBD02,CaG15,CoG12,Fao12,HLW06,Lub08,ThA12,Tha12b} and the references therein).
If the exact solution to the NLS admits high regularity, such discretization schemes typically show a remarkably good performance. However, if the regularity of the solution is strongly reduced, either by rough potentials $V$ (e.g. disorder potentials or optical lattices) or by rough initial values $u(0)$ (e.g. when effects close to phase transitions are studied), then the performance of these methods can drop dramatically. Here we refer exemplarily to the recent numerical experiments reported in \cite{OstermannSchratz-1,OstermannSchratz-2,HW18}. To overcome this issue, Ostermann and Schratz proposed new low-regularity time-integrators \cite{OstermannSchratz-1,OstermannSchratz-2} which improve the convergence in low regularity regimes significantly. However, the approach still relies on a Fourier discretization in space, which is not an optimal choice due to the loss of spectral convergence for non-smooth solutions. Practically, the usage of a (low order) finite element space discretization is often desirable in order to account for spatial low regularity. In the following we will only discuss approaches that can be easily combined with finite elements in space, meaning that we put ourselves into the situation that we assume that the solution to the NLS does not admit much smoothness.

Nonlinear Schr\"odinger equations come with important physical invariants, where the mass and the energy are considered as two of the most crucial ones. When solving a NLS numerically it is therefore of great importance to also reproduce this conservation on the discrete level. This aspect was emphasized by various numerical studies \cite{HW18,Sanz-Serna}, where it was also found that the complexity of the physical setup (or low-regularity) can stress this issue even further.

For the subclass of power law nonlinearities of the form $\gamma(|u|^2)= \sum_{k=1}^K \kappa_k |u|^{2\sigma_k}$ for $\sigma_k \ge 0$ and $\alpha_k \in \R$, a mass and energy conserving relaxation scheme was proposed and analyzed by Besse \cite{Besse,BDD18}. Thanks to its properties, the scheme shows a very good performance in realistic physical setups \cite{HW18}. Despite the large variety of different numerical approaches for solving the time-dependent NLS (cf. \cite{Akrivis1991,ANTOINE20132621,BaoNum,HeM17,KaM98,KaM99,Lub08,Sanz-SernaNLCN,ThA12,Tha12b,CrankH1,Wang,zouraris_2001} and the references therein) the literature knows however of only one time discretization that conserves both mass and energy simultaneously for arbitrary (smooth) nonlinearities. This discretization, which was first mathematically studied by Sanz-Serna \cite{Sanz-SernaNLCN} and which is long-known in the physics community, is a Crank--Nicolson-type (CN) approach where the nonlinearity is approximated by a suitable difference quotient involving the primitive integral of $\gamma$. This is also the time discretization that we shall consider in this paper. Here we note it was analytically and numerically demonstrated that this method is applicable and reliable in low-regularity regimes \cite{HW18,NonlinearCN}. 

A combination of the method with a finite difference space discretization was proposed and analyzed by Bao and Cai  \cite{BaC12,BaC13}. Combining the Crank--Nicolson time discretization with a $P1$ finite element discretization in space, the first a priori error estimates 
for the arising method
were obtained by Sanz-Serna 1984 \cite{Sanz-SernaNLCN} for cubic nonlinearities. He considers the case $d=1$ and derives optimal $L^{\infty}(L^2)$-error estimates under the coupling constraint $\deltat{}\lesssim h$, where $\deltat{}$ denotes the time step size of the Crank-Nicolson method and $h$ the mesh size of the finite element  discretization in space. In 1991, Akrivis et al. \cite{Akrivis1991} improved this result by showing optimal convergence rates in $L^{\infty}(L^2)$ in dimension $d=1,2,3$ and under the relaxed coupling constraint $\deltat{} \lesssim h^{d/4}$. Finally, in 2017 \cite{NonlinearCN}, the $L^{\infty}(L^2)$-error estimates could be improved yet another time by showing that the coupling constraint can be fully removed. Furthermore, general nonlinearities could be considered, the influence of potentials could be taken into account  and even convergence under weak regularity assumptions could be proved (with reduced convergence rates). However, so far, optimal error estimates in $L^{\infty}(H^1)$ for this particular CN-discretization are still open in the literature. 

One reason for this absence of $H^1$-results could be related to the techniques used for the error analysis in previous works (cf. \cite{Akrivis1991,HeM17,KaM98,KaM99,Sanz-SernaNLCN,zouraris_2001}) which is based on the following steps: {\it 1.} Appropriate truncation of the nonlinearity to obtain a problem with bounded growth. {\it 2.} Analyzing the scheme with truncation in the FE space and deriving corresponding $L^{\infty}(L^2)$- and/or $L^{\infty}(H^1)$-error estimates. {\it 3.} Using inverse estimates in the finite element space to show that the truncated approximations are uniformly bounded in $L^{\infty}(L^{\infty})$ by a term of the form $C (1  +  h^{-s} ( \deltat{}^2 + h^{p}))$, with appropriate powers $p>s>0$ that depend on the considered space discretization, regularity and space dimension. {\it 4.} Concluding that if $\deltat{}$ and $h$ are coupled in an appropriate way, then the truncated approximations are all uniformly bounded by a constant $C$ and hence coincide with a solution to the scheme without truncation.

This strategy does not only have the disadvantage that it produces unnecessary coupling conditions, but also that it becomes impractically technical when considering $L^{\infty}(H^1)$-error estimates for the Crank--Nicolson FEM. This is because it requires a suitable truncation of the primitive integral of $\gamma$ that is on the one hand consistent with the energy conservation and on the other hand allows for uniform bounds of the approximations in $L^{\infty}(W^{1,\infty})$. However, thanks to the new techniques developed in \cite{Wang} and the CN error analysis suggested in \cite{NonlinearCN} in the context of $L^{\infty}(L^2)$-error estimates, the truncation step is no longer necessary and the desired $L^{\infty}(L^{\infty})$-bounds can be derived with elliptic regularity theory. With this, it is now possible to obtain $L^{\infty}(H^1)$ estimates in a direct way, not only in the finite element setting, but also in the semi-discrete Hilbert space setting.

In this paper we will therefore build upon the results from \cite{NonlinearCN,Wang} to fill the gap in the literature and prove optimal $L^{\infty}(H^1)$-error estimates for the energy-conservative Crank--Nicolson approach without coupling constraints and for a general class of nonlinearities. The paper is structured as follows. In Section \ref{section-notation-assumptions} we present the notation and the analytical assumptions on the problem. In Section \ref{section-time-discrete-CN} we present the time--discrete Crank--Nicolson method, we recall its well-posedness and optimal error estimates in $L^{\infty}(L^2)$. Furthermore, we present and prove the new error estimate in $L^{\infty}(H^1)$. 
The paper continues with the fully-discrete setting presented in Section \ref{section-fully-discrete-CN}, where the time discretization is combined with a finite element discretization in space. We recall what is known about this discretization and finally prove corresponding $L^{\infty}(H^1)$-error estimates, which is the main result of this paper. The paper concludes with a note on how to efficiently implement the method and two numerical experiments to confirm the convergence rates and to illustrate a setting in which it makes computational sense to use the CN-FEM instead of for example a spectral method.

\section{Notation and Assumptions}
\label{section-notation-assumptions}

We start with introducing the analytical setting of this work. Throughout the paper we assume that ${\mathcal{D}} \subset \R^d$ (for $d=2,3$) is a convex bounded domain with polyhedral boundary.  On $\mathcal{D}$, the Sobolev space of complex-valued, weakly differentiable functions with a zero trace on $\partial \mathcal{D}$ and $L^2$-integratable partial derivatives is as usual denoted by $H^1_0(\mathcal{D}):=H^1_0(\mathcal{D},\C)$. The potential $V \in L^{\infty}(\mathcal{D};\R)$ is assumed to be real and nonnegative. Indirectly, we also assume that $V$ is sufficiently smooth so that it is compatible with the regularity assumptions for $u$ listed below (see \cite{NonlinearCN} for a discussion on this aspect). The (possibly nonlinear) function 
$$\gamma : [0,\infty) \rightarrow [0,\infty)$$
is assumed to be $C^2[0,\infty)$, fulfills $\gamma(0)=0$ and its growth can be characterized with
\begin{align*} 
|\gamma(|v|^2)v - \gamma(|w|^2)w| \le L(K) |v - w| \qquad \mbox{for all } v,w \in \C \mbox{ with } |v|,|w|\le K
\end{align*}
where $L$ is a function with the following growth properties
\begin{align*} 
0 \le L(s) \le C s^q \qquad \mbox{for } s \ge 0 \qquad \mbox{and } 
\begin{cases}
q \in [0,\infty) &\mbox{for } d=2,\\
q \in [0,4) &\mbox{for } d=3.
\end{cases}
\end{align*}
Note that in \cite{NonlinearCN} the admissible growth condition in $3d$ requires $q \in [0,2)$, which is however a typo and should be, as above, $q \in [0,4)$ (cf. \cite[Proposition 3.2.5 and Remark 3.2.7]{Cazenave} for the original result).
Examples of nonlinearities that fulfill these assumptions are mentioned in the introduction. The most common and physically relevant choices covered by our setting are power law nonlinearities $\gamma(\rho)=\kappa \rho^{q}$ for $\kappa\ge 0$ and $0\le q<\infty$ in  $2d$ and $0\le q < 4$ in $3d$. Other physically relevant nonlinearities that fulfill the conditions are saturated nonlinearities appearing in the modeling of optical wave propagation such as $\gamma(\rho)=\kappa \rho (1+ \alpha \rho)^{-1}$ for $\alpha,\kappa\ge 0$. 

The above assumptions cover the regime of so-called defocussing (positive) nonlinearities and guarantees that the NLS and its Crank-Nicolson discretization are well-posed. For focussing (negative) nonlinearities, i.e. $\gamma : [0,\infty) \mapsto (-\infty,0]$, the well-posedness (of both the continuous and discrete models) can no longer be guaranteed without making additional technical assumptions. Typically, effects such as finite time blow ups can occur in this regime. To avoid constantly having to invoke a saving clause we restrict our attention to the defocussing case. We do however point out that under the assumptions that the NLS and the Crank-Nicolson discretizations are well-posed (without blow-up in the time interval $[0,T]$) then all our error estimates hold without changes.

For the initial value we assume that $u^0 \in H^1_0(\mathcal{D}) \cap H^2(\mathcal{D})$ and, without loss of generality, that it has a normalized mass, i.e. $\int_{\mathcal{D}} |u^0(x)|^2 \hspace{2pt}dx =1$. With this, the considered nonlinear Schr\"odinger equation (NLS) reads as follows. For a maximum time $T>0$ and an initial value $u^0$, we seek
$$
u \in L^{\infty}([0,T],H^1_0({\mathcal{D}})) 
\qquad
\mbox{and}
\qquad
\partial_t  u \in L^{\infty}([0,T],H^{-1}({\mathcal{D}}))
$$
such that $u(\cdot,0)=u^0 $ and
\begin{eqnarray}
\label{model-problem}\ci \partial_t u = - \Delta u 
+  V \hspace{1pt} u + \gamma( |u|^2 ) \hspace{1pt} u
\end{eqnarray}
 in the sense of distributions. Problem \eqref{model-problem} admits at least one solution, that is even unique for repulsive cubic nonlinearities in $1d$ and $2d$ (cf. \cite{Cazenave} in general and \cite[Remark 2.1]{NonlinearCN} for precise references). We assume that the solution admits the following additional regularity, which is
\begin{align}
\label{regularity-assumptions}
&u_{ttt} \in L^2(0,T;H^1(\mathcal{D})), \qquad  \qquad \partial_t^{(k)} u \in L^2(0,T;H^2(\mathcal{D})) \quad \mbox{for } 0\le k \le 2 \\
\mbox{and} \quad &u\in L^{\infty}(0,T;W^{1,\infty}(\mathcal{D})), \label{W1infty}
\end{align}
where we note that any solution with the increased regularity of \eqref{regularity-assumptions} must be unique (cf. \cite[Lemma 3.1]{NonlinearCN}).  In the rest of the paper $u$ hence always refers to this uniquely characterized solution.

It is well known that solutions to the NLS \eqref{model-problem} preserve the mass, i.e.
$$
\int_{\mathcal{D}} |u(t,x)|^2 \hspace{2pt} dx = \int_{\mathcal{D}} |u^0(x)|^2 \hspace{2pt} dx = 1
$$
and the energy, i.e.
$$
E[ u(t) ] = E[u^0], \qquad \mbox{where } E[u]:= \frac{1}{2} \int_{\mathcal{D}} |\nabla u(x)|^2 + V(x) \hspace{2pt} |u(x)|^2  + \Gamma(|u(x)|^2) \hspace{2pt} dx,
$$
with $\Gamma(\rho) := \int_0^{\rho} \gamma(r)\hspace{2pt}dr$.

For brevity, we shall denote the $L^2$-norm of a function $v \in L^2(\mathcal{D}):=L^2(\mathcal{D},\C)$ by $\| v \|$. The $L^2$-inner product is denoted by $\langle v, w  \rangle=\int_{\mathcal{D}} v(x) \hspace{2pt} \overline{w(x)} \hspace{2pt} dx$. Here, $\overline{w}$ denotes the complex conjugate of $w$.

Throughout the paper we will use the notation $A \lesssim B$, to abbreviate $A \le C B$, where $C$ is a constant that only depends on $u$, $T$, $d$, $\mathcal{D}$, $V$ and $\gamma$, but not on the discretization.

\begin{remark}In the analysis we restrict our attention to homogeneous Dirichlet boundary conditions. Typically these boundary conditions can be motivated by physical reasoning. For example in the context of Bose Einstein condensates, the magnetic potential $V$ is a trapping potential that becomes very quickly very large and hence traps the condensate in a bounded region. Mathematically this leads to an exponential decay of the solution $u$ to zero (in moderate distances from the origin of the coordinate system) and hence justifies to truncate the computational domain to a simple geometric object on which the problem is solved with zero boundary conditions. A typical alternative found in the literature are periodic boundary conditions which are e.g. favorable for spectral methods. Both the formulation of the Crank-Nicolson method and its error analysis can be easily generalized to that case. 
 \end{remark}
 
\section{Time-discrete Crank-Nicolson scheme} 
\label{section-time-discrete-CN}
In this section we will state the semi-discrete Crank-Nicolson scheme, recall its well-posedness and available stability bounds, and then use these results to prove optimal $L^{\infty}(H^1)$-error estimates in the Hilbert space setting. For that, let $T$ denote the final time of computation,  $N$ the number of time-steps, and $\tau = T/N$ the time step size. By $t_n$ we shall mean $t_n=n\tau$. The exact solution at time $t_n$ shall be denoted by $u^n:=u(t_n,\cdot)$. We also introduce a short hand notation for discrete time derivatives which is $D_\tau u^n := (u^{n+1}-u^n)/\tau $ and analogously $D_\tau \hatu{n} := ( \hatu{n+1} - \hatu{n})/\tau $.

\subsection{Method formulation and main result}
With the notation above, the semi-discrete Crank--Nicolson approximation $\hatu{n+1} \in H^1_0(\mathcal{D})$ to $u^{n+1}$ is given recursively as the solution (in the sense of distributions) to the equation
\begin{eqnarray}
\label{semi-disc-cnd-problem}
 \ci D_\tau \hatu{n}
 = - \Delta \hatu{n+\frac{1}{2}}
+  V \hspace{1pt} \hatu{n+\frac{1}{2}} + 
\frac{\Gamma(|\hatu{n+1}|^2)-\Gamma(|\hatu{n}|^2)}{|\hatu{n+1}|^2-|\hatu{n}|^2} \hspace{1pt} \hatu{n+\frac{1}{2}},
\end{eqnarray}
where $\hatu{n+\frac{1}{2}}:=(\hatu{n}+\hatu{n+1})/2$. The initial value is selected as $\hatu{0}=u^0$. It is easily seen that the discretization conserves both mass and energy, i.e.
\begin{align*}
\int_{\mathcal{D}} |\hatu{n}|^2 \hspace{2pt} dx =  \int_{\mathcal{D}} |u^0|^2 \hspace{2pt} dx \qquad
\mbox{and} \qquad E[\hatu{n}]  = E[u^0] \qquad \mbox{for all } n\ge 0.
\end{align*}
The scheme \eqref{semi-disc-cnd-problem} is well-posed and admits a set of a priori error estimates. The properties are summarized in the following theorem that is proved in \cite[Theorem 4.1]{NonlinearCN}.
\begin{theorem}
\label{main-theorem-1}
Under the general assumptions of this paper, there exists a constant $C(u)>0$ and a solution $\hatu{n}\in H^1_0(\mathcal{D})$ to the semi-discrete Crank-Nicolson scheme \eqref{semi-disc-cnd-problem} that is uniquely characterized by the property that
\begin{align}
\label{bounds-semi-discrete}
\sup_{0\le n \le N} \left( \| \hatu{n} \|_{L^{\infty}(\mathcal{D})} + \| \hatu{n} \|_{H^{2}(\mathcal{D})}\right) \le C(u),
\end{align}
and the a priori estimate for the $L^2$-error
\begin{align*}
\sup_{0\le n \le N} \| \hatu{n} - u^n \|
\lesssim \deltat{}^2,
\end{align*}
where $u$ is the (unique) exact solution with the regularity property \eqref{regularity-assumptions}.
\end{theorem}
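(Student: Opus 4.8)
The plan is to derive all the assertions of Theorem~\ref{main-theorem-1} --- existence of a Crank--Nicolson iterate $\hatu{n}$, the uniform $L^{\infty}(\mathcal{D})$- and $H^{2}(\mathcal{D})$-bounds \eqref{bounds-semi-discrete}, uniqueness under those bounds, and the second-order $L^2$-error estimate --- from the built-in conservation laws of \eqref{semi-disc-cnd-problem} together with elliptic regularity on the convex polyhedral domain $\mathcal{D}$, thereby avoiding the classical truncation of the nonlinearity. I would begin with the two conservation identities. Testing \eqref{semi-disc-cnd-problem} with $\hatu{n+\frac{1}{2}}$ and taking the imaginary part annihilates the $-\Delta$-, $V$- and nonlinear contributions (the last because the difference quotient of $\Gamma$ is real-valued and multiplies $\hatu{n+\frac{1}{2}}$), leaving $\|\hatu{n+1}\|=\|\hatu{n}\|=1$. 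Testing instead with $D_\tau\hatu{n}$ and taking the real part, the whole purpose of the difference quotient $(\Gamma(|\hatu{n+1}|^2)-\Gamma(|\hatu{n}|^2))/(|\hatu{n+1}|^2-|\hatu{n}|^2)$ is that $\mathrm{Re}\,\langle (\cdots)\,\hatu{n+\frac{1}{2}},D_\tau\hatu{n}\rangle=\tfrac{1}{2\tau}\int_{\mathcal{D}}\bigl(\Gamma(|\hatu{n+1}|^2)-\Gamma(|\hatu{n}|^2)\bigr)\,dx$, which telescopes with the quadratic terms to give $E[\hatu{n}]=E[u^0]$. Since $V\ge 0$ and $\gamma\ge 0$ (hence $\Gamma\ge 0$), this yields the uniform a priori bound $\sup_{0\le n\le N}\|\hatu{n}\|_{H^1_0(\mathcal{D})}\le C$.

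Next I would establish existence of $\hatu{n+1}$ from $\hatu{n}$. Setting $w:=\hatu{n+\frac{1}{2}}$, so that $\hatu{n+1}=2w-\hatu{n}$, equation \eqref{semi-disc-cnd-problem} turns into a fixed-point problem $F(w)=0$ for a continuous map $F$ assembled from $-\Delta+V$ and the nonlinearity. On an increasing sequence of finite-dimensional subspaces I would solve the Galerkin projection $P_mF(w_m)=0$: the mass estimate --- which relates $\mathrm{Im}\,\langle F(w),w\rangle$ to $\|w\|^2-\mathrm{Re}\,\langle\hatu{n},w\rangle$ --- confines the Galerkin iterates $w_m$ to a ball of radius $1$, so Brouwer's fixed-point theorem applies, while the energy bound supplies the compactness needed to extract a convergent subsequence $w_m\to w$. (For small $\tau$ a contraction argument gives local uniqueness of $w$, but the global statement is deferred to the end.)

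The heart of the matter --- and the step I expect to be the main obstacle --- is upgrading the uniform $H^1$-bound to the uniform $H^{2}$- and $L^{\infty}$-bounds of \eqref{bounds-semi-discrete}, uniformly in $n$ and $\tau$. Rewriting \eqref{semi-disc-cnd-problem} as $-\Delta\hatu{n+\frac{1}{2}}=\ci D_\tau\hatu{n}-V\hatu{n+\frac{1}{2}}-g^n\hatu{n+\frac{1}{2}}$, with $g^n$ the difference quotient of $\Gamma$, elliptic regularity on $\mathcal{D}$ gives $\|\hatu{n+\frac{1}{2}}\|_{H^2}\lesssim\|\text{right-hand side}\|$. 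The potential term is harmless, and the nonlinear term is controlled in $L^2$ by H\"older's inequality together with the embeddings $H^1\hookrightarrow L^6$ and $H^2\hookrightarrow L^{\infty}$ (valid for $d\le 3$) and the growth hypothesis on $L$; it is precisely the interplay of these embeddings with the bounds $|\gamma(|v|^2)v|\lesssim|v|^{q+1}$ and $|g^n|\lesssim\max(|\hatu{n}|,|\hatu{n+1}|)^{q}$ that forces the corrected admissible range $q\in[0,4)$ in three dimensions (the result of Cazenave cited above). The genuinely delicate quantity is $\|D_\tau\hatu{n}\|$: I would first bound it in $H^{-1}(\mathcal{D})$ directly from the equation using only the $H^1$-bound, and then close the argument by induction on $n$ --- estimating the discrete time-difference of \eqref{semi-disc-cnd-problem} (subtracting the equations at two consecutive levels and testing suitably) to obtain a discrete energy inequality for $\|\hatu{n}\|_{H^2}$, or equivalently for $\|D_\tau\hatu{n}\|_{H^1}$, closed by discrete Gr\"onwall and seeded by $u^0\in H^1_0(\mathcal{D})\cap H^2(\mathcal{D})$. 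The care here is that no constant may degenerate as $\tau\to 0$ or $n\tau\to T$; note that in this Hilbert-space setting no coupling between $\tau$ and a spatial scale is available, nor needed. Once $\sup_n\|\hatu{n}\|_{H^2}\le C(u)$ is in hand, the embedding $H^2\hookrightarrow L^{\infty}$ gives $\sup_n\|\hatu{n}\|_{L^{\infty}}\le C(u)$, which is \eqref{bounds-semi-discrete}.

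Finally, with the uniform $L^{\infty}$-bound on $\hatu{n}$ just obtained and on $u$ from \eqref{W1infty}, the nonlinearity behaves as if globally Lipschitz with constant $L(C(u))$, and the error estimate reduces to a routine energy argument. Put $e^n:=\hatu{n}-u^n$. Expanding the NLS \eqref{model-problem} by Taylor's formula about $t_{n+\frac{1}{2}}$ and subtracting \eqref{semi-disc-cnd-problem} gives an error equation whose consistency defect $c^n$ satisfies $\tau\sum_{n}\|c^n\|^2\lesssim\tau^4$, the relevant remainders being controlled by $u_{ttt}\in L^2(0,T;H^1(\mathcal{D}))$ and $\partial_t^{(k)}u\in L^2(0,T;H^2(\mathcal{D}))$ from \eqref{regularity-assumptions} (expanding the difference quotient of $\Gamma$ also uses $\Gamma\in C^3$, which follows from $\gamma\in C^2$). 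Testing the error equation with $e^{n+\frac{1}{2}}$ and taking the imaginary part removes the $-\Delta$- and $V$-terms and bounds the nonlinear contribution by $C\|e^{n+\frac{1}{2}}\|^2$, so that $\|e^{n+1}\|^2-\|e^n\|^2\lesssim\tau\bigl(\|e^n\|^2+\|e^{n+1}\|^2\bigr)+\tau\|c^n\|^2$; since $e^0=0$, summation and discrete Gr\"onwall give $\sup_{0\le n\le N}\|e^n\|\lesssim\tau^2$. Applying the same test to the difference of any two solutions both satisfying \eqref{bounds-semi-discrete} (where now $c^n\equiv 0$) forces that difference to vanish, which establishes the claimed unique characterization and completes the proof.
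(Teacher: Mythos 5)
The paper does not prove Theorem~\ref{main-theorem-1} at all: it is imported verbatim from the reference cited immediately above it (Henning--Peterseim, \emph{Math.\ Models Methods Appl.\ Sci.}\ 27 (2017), Theorem~4.1), so there is no in-paper argument to compare against. That said, your outline is a faithful reconstruction of the strategy used in that reference: mass and energy conservation from testing with $\hatu{n+\frac12}$ and $D_\tau\hatu{n}$ to get the uniform $H^1$ bound; existence by a Galerkin/Brouwer fixed-point argument anchored on the a priori identity $\|w\|^2=\mathrm{Re}\langle\hatu{n},w\rangle$; the $H^2$ and $L^\infty$ bounds via elliptic regularity without truncation (the technique of Wang that the introduction credits for making the $L^\infty$ bounds available); and the $L^2$ error estimate plus uniqueness by testing the error equation with $e^{n+\frac12}$, taking imaginary parts and applying discrete Gr\"onwall. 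The one place where your sketch is thin is exactly the step you flag as the main obstacle: the uniform bound on $\|D_\tau\hatu{n}\|_{L^2}$ must be closed \emph{before} any $L^\infty$ bound on $\hatu{n}$ is available, so the nonlinear difference terms in the time-differenced equation have to be controlled using only the $H^1\hookrightarrow L^6$ embedding and generalized H\"older inequalities --- this is precisely where the growth restriction on $q$ (and the $q\in[0,4)$ correction in $3d$ discussed in Section~\ref{section-notation-assumptions}) does its work, and a complete proof would need to spell out that there is no circularity. As a proposal it identifies the right ingredients and the right order of deployment.
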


Our main result on optimal error estimates in the $L^{\infty}(H^1)$ reads as follows.
\begin{theorem}[Optimal $H^1$-error estimates for the semi-discrete method] \label{MainResult}
Consider the setting of Theorem \ref{main-theorem-1}, then the $L^{\infty}(H^1)$-error converges with optimal order in $\tau$, i.e.
\begin{align*}
\sup_{0\le n \le N} \| \hatu{n} - u^n \|_{H^{1}(\mathcal{D})} \lesssim \deltat{}^2.
\end{align*}
\end{theorem}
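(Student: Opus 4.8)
I would prove this by a discrete energy estimate for $\nabla e^n$, where $e^n:=\hatu{n}-u^n$, bootstrapping on the $L^2$-bound and the uniform $H^2$- and $L^\infty$-bounds for $\hatu{n}$ supplied by Theorem \ref{main-theorem-1}. Note first that merely interpolating $\|e^n\|\lesssim\tau^2$ against $\|e^n\|_{H^2}\lesssim C(u)$ yields only the suboptimal rate $\|e^n\|_{H^1}\lesssim\tau$, so a genuine argument is needed to gain the missing power of $\tau$. The plan is to subtract \eqref{semi-disc-cnd-problem} from the \emph{average over $t_n$ and $t_{n+1}$} of \eqref{model-problem} (rather than from \eqref{model-problem} evaluated at $t_{n+\frac12}$): averaging is convenient because then the linear operators match exactly and the error equation reads
\[
\ci D_\tau e^n = -\Delta e^{n+\frac12} + V e^{n+\frac12} + \zeta^n + \eta^n, \qquad e^{n+\frac12}:=\tfrac12(e^n+e^{n+1}),
\]
with $e^0=0$, where $\eta^n := \ci\bigl(D_\tau u^n - \tfrac12(\partial_t u(t_n)+\partial_t u(t_{n+1}))\bigr)$ is the time-consistency error and $\zeta^n := \tfrac{\Gamma(|\hatu{n+1}|^2)-\Gamma(|\hatu{n}|^2)}{|\hatu{n+1}|^2-|\hatu{n}|^2}\hatu{n+\frac12}-\tfrac12\bigl(\gamma(|u^n|^2)u^n+\gamma(|u^{n+1}|^2)u^{n+1}\bigr)$ is the nonlinearity mismatch.

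Next I would record the consistency bounds. Expanding $\partial_t u(t_j)$ around $t_{n+\frac12}$ with an integral remainder and using symmetry gives $\|\eta^n\|_{H^1}\lesssim\tau^{3/2}\|u_{ttt}\|_{L^2(t_n,t_{n+1};H^1)}$, and hence $\bigl(\sum_n\tau\|\eta^n\|_{H^1}^2\bigr)^{1/2}\lesssim\tau^2$ by \eqref{regularity-assumptions}; the pointwise-in-$n$ bound is only of order $\tau^{3/2}$, so it is essential to work with this summed ($\ell^2$-in-time) norm throughout. For $\zeta^n$ I would split off the $e$-dependence, $\zeta^n = \hat g^n e^{n+\frac12} + (\hat g^n-g^n_u)u^{n+\frac12} + \zeta^n_{\mathrm{cons}}$, where $\hat g^n$, $g^n_u$ are the difference quotients of $\Gamma$ evaluated at $\hatu{}$ resp.\ $u$, and $\zeta^n_{\mathrm{cons}}:=g^n_u u^{n+\frac12}-\tfrac12\bigl(\gamma(|u^n|^2)u^n+\gamma(|u^{n+1}|^2)u^{n+1}\bigr)$. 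Using $\gamma\in C^2$ together with the uniform $L^\infty$-bounds on $\hatu{n}$ from Theorem \ref{main-theorem-1} and on $u$ from \eqref{W1infty} one gets $|\hat g^n-g^n_u|\lesssim |e^n|+|e^{n+1}|$ pointwise, so the first two pieces are ``Lipschitz in $e$'' (with all spatial-derivative factors uniformly bounded via the $H^2$-regularity), while $\zeta^n_{\mathrm{cons}}$ has vanishing leading order thanks to the difference-quotient structure of $\Gamma$ and satisfies $\bigl(\sum_n\tau\|\zeta^n_{\mathrm{cons}}\|_{H^1}^2\bigr)^{1/2}\lesssim\tau^2$; this is where the full strength of \eqref{regularity-assumptions} enters (e.g.\ $u_t,u_{tt}\in L^2(H^2)$ gives $u_t\in C([0,T];H^2)$, so products such as $\gamma'(|u|^2)\,\partial_t|u|^2\,u_t$ lie in $L^2(H^1)$). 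These are the $H^1$-analogues of the consistency estimates already carried out in \cite{NonlinearCN}.

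The heart of the proof is the energy identity obtained by testing the error equation with $-\Delta\bar e^{n+\frac12}\in L^2$ (legitimate since $\hatu{n},u^n\in H^2$) and taking imaginary parts. After one integration by parts the term $\langle\ci D_\tau e^n,-\Delta e^{n+\frac12}\rangle$ contributes $\tfrac1{2\tau}(\|\nabla e^{n+1}\|^2-\|\nabla e^n\|^2)$, the $-\Delta e^{n+\frac12}$-term is purely real and drops out, and, integrating by parts once more in each remaining $L^2$-pairing (all factors lie in $H^1_0$, using the $H^2$-regularity and, for the potential term, the assumed smoothness of $V$), one is left with contributions of the form $\operatorname{Im}\langle\nabla w^n,\nabla e^{n+\frac12}\rangle$. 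Multiplying by $\tau$, summing from $n=0$ to $M-1$ and using $e^0=0$ gives $\tfrac12\|\nabla e^M\|^2=\sum_{n=0}^{M-1}\tau\operatorname{Im}\langle\nabla(Ve^{n+\frac12})+\nabla\zeta^n+\nabla\eta^n,\nabla e^{n+\frac12}\rangle$. The right-hand side is then split into three groups: (a) the genuine consistency parts ($\eta^n$, $\zeta^n_{\mathrm{cons}}$) and the ``mixed'' terms in which one factor is $O(\tau^2)$ in $L^2$ — e.g.\ $\|e^{n+\frac12}\|\lesssim\tau^2$ paired with $\nabla$ of a bounded factor, which includes the potential term via $\operatorname{Im}\langle(\nabla V)e^{n+\frac12},\nabla e^{n+\frac12}\rangle$ — for which Cauchy--Schwarz in the $\ell^2$-in-time norm gives a bound $C\tau^2\sqrt T\max_{m\le M}\|\nabla e^m\|\le\varepsilon\max_{m\le M}\|\nabla e^m\|^2+C_\varepsilon\tau^4$; (b) the purely quadratic term $\langle\hat g^n\nabla e^{n+\frac12},\nabla e^{n+\frac12}\rangle$, whose imaginary part vanishes because $\hat g^n$ is real-valued; and (c) the remaining quadratic pieces, which are $\lesssim\|e^{n+\frac12}\|_{H^1}\|\nabla e^{n+\frac12}\|$ and feed a term $C\sum_{n=0}^{M-1}\tau\|\nabla e^{n+\frac12}\|^2$. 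Maximizing over $M$, absorbing the $\varepsilon$-term, and applying a discrete Gr\"onwall inequality yields $\max_{0\le M\le N}\|\nabla e^M\|^2\lesssim\tau^4$; together with Poincar\'e's inequality (or simply with the $L^2$-estimate of Theorem \ref{main-theorem-1}) this is exactly the claim.

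I expect the main obstacle to be organizing the estimate so that no power of $\tau$ is lost. Three points are crucial: (i) one must use the averaged form of \eqref{model-problem}, so that the spatial operators agree and no spatial-consistency term $\Delta\bigl(u(t_{n+\frac12})-u^{n+\frac12}\bigr)$ — which lies only in $L^2$, not in $H^1$, and would block the integration by parts — appears; (ii) the consistency errors, being only $O(\tau^{3/2})$ pointwise in $n$ (because $u_{ttt}$ is merely square-integrable in time), must be kept in $\dot H^1$ and paired with $\nabla e^{n+\frac12}$ via integration by parts, then controlled through their $O(\tau^2)$ summed norms together with the ``maximum on both sides'' absorption and discrete Gr\"onwall — pairing them instead with $D_\tau e^n$ or with $\Delta e^{n+\frac12}$ would cost a power of $\tau$; and (iii) proving the $O(\tau^2)$ summed $\dot H^1$-bound for $\zeta^n_{\mathrm{cons}}$ requires exploiting the cancellation coming from the difference-quotient form of $\Gamma$ and placing the various products of $u,u_t,u_{tt}$ and derivatives of $\gamma$ into $L^2(H^1)$ by means of \eqref{regularity-assumptions}.
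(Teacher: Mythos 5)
Your argument is sound and reaches the same conclusion, but it is organized quite differently from the paper's proof, and the comparison is instructive. The paper forms the error equation against \eqref{model-problem} evaluated at $t_{n+1/2}$, so its consistency error $T^n$ contains the term $\Delta\bigl(u^{n+1/2}-u(t_{n+1/2})\bigr)$; it then tests with $D_\tau e^n$, takes real parts, and handles the resulting pairings $\operatorname{Re}\langle e^n_\gamma,D_\tau e^n\rangle$ and $\operatorname{Re}\langle T^n,D_\tau e^n\rangle$ by substituting $D_\tau e^n$ back from the error equation (turning them into imaginary parts of pairings with $\Delta e^{n+1/2}$, $Ve^{n+1/2}$, etc.), except for the Laplacian consistency term, which is the delicate one and is treated by summation by parts in time plus a Young-inequality absorption of the resulting endpoint term. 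Your two choices --- comparing against the \emph{averaged} PDE so that the spatial operators cancel exactly, and testing directly with $-\Delta e^{n+1/2}$ and taking imaginary parts --- achieve in one stroke what the paper obtains via back-substitution and summation by parts: the problematic term $\Delta\bigl(u^{n+1/2}-u(t_{n+1/2})\bigr)$ never appears, and no endpoint term has to be absorbed at the end. The price is that your nonlinear consistency error $\zeta^n_{\mathrm{cons}}$ compares the $\Gamma$-difference quotient with the \emph{trapezoidal} average $\tfrac12(\gamma(|u^n|^2)u^n+\gamma(|u^{n+1}|^2)u^{n+1})$ rather than with the midpoint value, whereas the paper reduces everything to the quantities $\tfrac12(|u^n|^2+|u^{n+1}|^2)-|u(t_{n+1/2})|^2$ and $u^{n+1/2}-u(t_{n+1/2})$, which are pure second-order time-discretization remainders of $u$ and hence $O(\tau^2)$ in $H^1$ directly from \eqref{regularity-assumptions}. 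The remaining ingredients (the pointwise bound $|\hat g^n-g^n_u|\lesssim|e^n|+|e^{n+1}|$ and its $H^1$ analogue via second derivatives of $f(a,b)=\tfrac{1}{b-a}\int_a^b\gamma$, using \eqref{bounds-semi-discrete} and \eqref{W1infty}) coincide with the paper's Lemmas \ref{lemma-3.4} and \ref{lemma-3.5}, and your $\ell^2$-in-time treatment of the $O(\tau^{3/2})$ pointwise consistency errors matches \eqref{bounds-for-Tk}.

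One point deserves more care than you give it: the claimed bound $\bigl(\sum_n\tau\|\zeta^n_{\mathrm{cons}}\|_{H^1}^2\bigr)^{1/2}\lesssim\tau^2$. Writing $a=|u^n|^2$, $b=|u^{n+1}|^2$, one has $\zeta^n_{\mathrm{cons}}=g(a,b)\,u^{n+1/2}-\tfrac14(\gamma(b)-\gamma(a))(u^{n+1}-u^n)$ with $g(a,b)=\tfrac{1}{b-a}\int_a^b\gamma-\tfrac{\gamma(a)+\gamma(b)}{2}$. The second summand is a product of two $O(\tau)$ factors and is unproblematic, and $\|g\|_{L^\infty}\lesssim (b-a)^2\lesssim\tau^2$; but $\nabla g=\partial_ag\,\nabla a+\partial_bg\,\nabla b$ with $\partial_ag,\partial_bg=O(b-a)=O(\tau)$ only, so the naive estimate gives merely $\|\nabla g\|\lesssim\tau$ and would cost a power of $\tau$. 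You must exploit the cancellation $\partial_ag\,\nabla a+\partial_bg\,\nabla b=\partial_ag\,\nabla(a-b)+(\partial_ag+\partial_bg)\nabla b$, where $\nabla(a-b)=O(\tau)$ by \eqref{regularity-assumptions} and $\partial_ag+\partial_bg=\tfrac{\gamma(b)-\gamma(a)}{b-a}-\tfrac{\gamma'(a)+\gamma'(b)}{2}$ is a trapezoidal error for $\gamma'$ --- which is $O(\tau^2)$ only if $\gamma''$ is Lipschitz. This is the same kind of informality the paper itself commits in Appendix \ref{consistency-error-appendix} (where $\nabla c'$ is tacitly treated as bounded), so it does not disqualify your approach, but it should be spelled out, and strictly speaking it asks slightly more of $\gamma$ than the stated $C^2$ hypothesis.
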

The theorem is proved in Section \ref{proofs-semidiscrete-case} below.

\subsection{Proof of Theorem \ref{MainResult}}
\label{proofs-semidiscrete-case}

In this section we will prove Theorem \ref{MainResult}. Let us introduce some notation that is used throughout the proofs. We recall $D_\tau e^n = ( e^{n+1} - e^{n})/\tau $. Furthermore, we let $e^{n+1/2}:=(e^{n+1} + e^n)/2$ and $u^{n+1/2}:=(u^{n+1} + u^n)/2$. For time derivatives at fixed time $t^n$, we also write $\partial_t u^n:= \partial_t u(t^n,\cdot)$.

We begin by establishing a differential equation for the time discrete error $e^n = u^n-u^n_\tau$. This is stated in the following lemma.
\begin{lemma}[Consistency error]\label{error_pde-lemma}
The error $e^n = u^n-u_\tau^n\ $  fulfills the identity 
\begin{equation} \label{error_pde}
\ci D_\tau e^n+\Delta e^{n+1/2}-Ve^{n+1/2}-e^n_\gamma= T^n,
\end{equation}
where the consistency error $T^n$ is given by
\begin{eqnarray}\label{Taylor}
\lefteqn{T^n := \ci \hspace{2pt}( D_\tau u^n - \partial_t u(t_{n+1/2}))+\Delta( u^{n+1/2}-u(t_{n+1/2})) -V(u^{n+1/2}-u(t_{n+1/2}))} \nonumber \\
&\enspace& -( \gamma(\xi^n)  u^{n+1/2} -\gamma(|u(t_{n+1/2})|^2) u(t_{n+1/2}) ).\hspace{250pt}
  \end{eqnarray} 
Here, $e^n_\gamma := \gamma(\xi^n)u^{n+1/2}-\gamma(\xi^n_\tau)u^{n+1/2}_\tau$ for some bounded functions
$\xi^n,\xi^n_\tau  \in L^{\infty}(\mathcal{D})$ with the properties that 
\begin{align*}
\xi^n(x) &\in [\min(|u^n|^2,|u^{n+1}|^2),\max(|u^n|^2,|u^{n+1}|^2)] \qquad \mbox{and} \\
\xi^n_\tau(x) &\in [\min(|u^n_\tau|^2,|u^{n+1}_\tau|^2),\max(|u^n_\tau|^2,|u^{n+1}_\tau|^2)]
\end{align*} 
for almost all $x\in \mathcal{D}$.
\end{lemma}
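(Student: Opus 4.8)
The plan is to obtain \eqref{error_pde} by a straightforward comparison of the governing equation \eqref{model-problem}, evaluated at the half-time level $t_{n+1/2}$, with the Crank--Nicolson scheme \eqref{semi-disc-cnd-problem}, after first rewriting the difference quotient of $\Gamma$ in a more convenient pointwise form.

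The first step concerns the nonlinear term. For fixed $x\in\mathcal{D}$ put $a=|u^n_\tau(x)|^2$ and $b=|u^{n+1}_\tau(x)|^2$. Since $\Gamma(\rho)=\int_0^\rho\gamma(r)\,dr$ with $\gamma$ continuous, for $a\neq b$ the quantity $(\Gamma(b)-\Gamma(a))/(b-a)=(b-a)^{-1}\int_a^b\gamma(r)\,dr$ is the average of $\gamma$ over the interval between $a$ and $b$; it thus lies between the minimum and the maximum of $\gamma$ there and, by the intermediate value theorem, equals $\gamma(\xi)$ for some $\xi\in[\min(a,b),\max(a,b)]$ (when $a=b$ the difference quotient is read as its limit $\gamma(a)$, and one sets $\xi=a$). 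Choosing $\xi^n_\tau(x)$ to be, say, the smallest such value makes $x\mapsto\xi^n_\tau(x)$ measurable, and the uniform bound \eqref{bounds-semi-discrete} from Theorem \ref{main-theorem-1} gives $\xi^n_\tau\in L^\infty(\mathcal{D})$ with the range asserted in the lemma. Repeating the construction with $u^n_\tau,u^{n+1}_\tau$ replaced by the exact values $u^n,u^{n+1}$ (bounded via \eqref{W1infty}) yields $\xi^n\in L^\infty(\mathcal{D})$ in the stated range together with $(\Gamma(|u^{n+1}|^2)-\Gamma(|u^n|^2))/(|u^{n+1}|^2-|u^n|^2)=\gamma(\xi^n)$ almost everywhere. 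After this rewriting, \eqref{semi-disc-cnd-problem} reads, in the sense of distributions,
$$
\ci D_\tau u^n_\tau=-\Delta u^{n+1/2}_\tau+Vu^{n+1/2}_\tau+\gamma(\xi^n_\tau)u^{n+1/2}_\tau.
$$

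Next I would define the residual produced by inserting the exact solution into the rewritten scheme,
$$
T^n:=\ci D_\tau u^n+\Delta u^{n+1/2}-Vu^{n+1/2}-\gamma(\xi^n)u^{n+1/2},
$$
and subtract from it the identity \eqref{model-problem} taken at $t=t_{n+1/2}$, namely $\ci\partial_t u(t_{n+1/2})+\Delta u(t_{n+1/2})-Vu(t_{n+1/2})-\gamma(|u(t_{n+1/2})|^2)u(t_{n+1/2})=0$. The resulting expression for $T^n$ is exactly \eqref{Taylor}, which identifies it with the consistency error of the lemma. Subtracting instead the distributional form of the scheme from the definition of $T^n$ gives $T^n=\ci D_\tau e^n+\Delta e^{n+1/2}-Ve^{n+1/2}-\bigl(\gamma(\xi^n)u^{n+1/2}-\gamma(\xi^n_\tau)u^{n+1/2}_\tau\bigr)$, which is precisely \eqref{error_pde} with $e^n_\gamma=\gamma(\xi^n)u^{n+1/2}-\gamma(\xi^n_\tau)u^{n+1/2}_\tau$. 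Apart from this rearrangement, the only point calling for a little care is the measurable selection of $\xi^n$ and $\xi^n_\tau$ and the treatment of the degenerate sets $\{|u^n|=|u^{n+1}|\}$ and $\{|u^n_\tau|=|u^{n+1}_\tau|\}$; these are handled by the explicit ``smallest preimage'' prescription above, after which membership in $L^\infty(\mathcal{D})$ is immediate from Theorem \ref{main-theorem-1} and \eqref{W1infty}.
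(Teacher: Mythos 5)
Your proposal is correct and follows essentially the same route as the paper: rewrite the difference quotient of $\Gamma$ as $\gamma(\xi)$ via the intermediate value theorem applied to the average of $\gamma$, define $T^n$ as the residual of the exact solution in the scheme, and subtract. You are in fact slightly more careful than the paper about the measurable selection of $\xi^n,\xi^n_\tau$ and the degenerate sets where $|u^n|=|u^{n+1}|$, which is a welcome (if minor) addition; the bound \eqref{bounds-for-Tk} mentioned in the paper's proof is not part of the lemma's assertion, so omitting it costs you nothing.
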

\begin{proof}
It is easily verified that exact solution fulfills
\begin{equation} \label{ExactDiscrete}
\ci D_\tau u^n + \Delta u^{n+1/2}- Vu^{n+1/2} -\frac{\Gamma(|u^{n+1}|^2)-\Gamma(|u^n|^2)}{|u^{n+1}|^2-|u^n|^2} u^{n+1/2}  = T^n.
\end{equation}
By the regularity assumptions we can apply Taylor expansion arguments to $T^n$ to see:
\begin{align}
\label{bounds-for-Tk}
\sum_{k=0}^{N}\|T^k\|^2 \leq C\tau^3
 \end{align}
The argument that proves \eqref{bounds-for-Tk} is elaborated in Appendix \ref{consistency-error-appendix}, where it also becomes visible how the regularity assumptions enter explicitly in the estimate.
Next, subtracting \eqref{ExactDiscrete} from \eqref{semi-disc-cnd-problem} we find that $e^n = u^n-u_\tau^n$ satisfies: 
\[\ci D_\tau e^n+\Delta e^{n+1/2}-Ve^{n+1/2}-e^n_\gamma= T^n \]
where $e^n_\gamma$ denotes the error coming from the nonlinear term, defined by \[e_\gamma^n=  \frac{\Gamma(|u^{n+1}|^2)-\Gamma(|u^n|^2)}{|u^{n+1}|^2-|u^n|^2}u^{n+1/2}-\frac{\Gamma(|u_\tau^{n+1}|^2)-\Gamma(|u^n_\tau|^2)}{|u^{n+1}_\tau|^2-|u^n_\tau|^2} u_\tau^{n+1/2}.  \]
Recalling the definition of $\Gamma$ we have:
 \begin{equation*}
 \frac{\Gamma(|u^{n+1}|^2)-\Gamma(|u^n|^2)}{|u^{n+1}|^2-|u^n|^2} = \frac{1}{|u^{n+1}|^2-|u^n|^2}\int_{|u^n|^2}^{|u^{n+1}|^2} \gamma(r)\hspace{2pt}dr =\colon\gamma(\xi^n) , 
\end{equation*}
likewise
\begin{align*}
\frac{\Gamma(|u^{n+1}_\tau|^2)-\Gamma(|u^n_\tau|^2)}{|u^{n+1}_\tau|^2-|u^n_\tau|^2} = \frac{1}{|u^{n+1}_\tau|^2-|u^n_\tau|^2}\int_{|u^n_\tau|^2}^{|u^{n+1}_\tau|^2} \gamma(r)\hspace{2pt}dr =: \gamma(\xi^n_\tau).
\end{align*}
The expression for $e^n_\gamma$ is thus simplified to
\begin{align*}
e^n_\gamma := \gamma(\xi^n)u^{n+1/2}-\gamma(\xi^n_\tau)u^{n+1/2}_\tau,
\end{align*}
where $\xi^n$ is a function taking values between $|u^n|^2$ and $|u^{n+1}|^2$ and $\xi^n_\tau$ a function taking values between $|u^n_\tau|^2$ and $|\hatu{n+1}|^2$.
\end{proof}
The differential equation in Lemma \ref{error_pde-lemma} is now used to derive a  recurrence formula for the $H^1$-norm of the error. Multiplying \eqref{error_pde} by $D_\tau e^n$, integrating and taking the real part yields:
\begin{align} \label{H1-Reccurence}
\frac{\|\nabla e^{n+1}\|^2-\|\nabla e^n\|^2}{2\tau} = \underbrace{ \text{Re}( \langle e_\gamma^n, D_\tau e^n\rangle)}_{\mbox{\rm I}} \hspace{5pt}\underbrace{-\hspace{3pt}\text{Re}(\langle T^n,D_\tau e^n\rangle
).}_{\mbox{\rm II}}
\end{align}
The idea is to bound the terms I and II in such a way that Grönwall's inequality can be used. 
We proceed to bound term I. Multiplying the error PDE \eqref{error_pde} by $e^n_\gamma$ results in:
\begin{align*}
\ci \langle D_\tau e^n, e_\gamma^n\rangle = \langle \nabla e^{n+1/2},\nabla e_\gamma^n\rangle + \langle Ve^{n+1/2},e^n_\gamma\rangle +  \| e_\gamma^n\|^2 + \langle T^n,e_\gamma^n\rangle  
\end{align*}
and consequently
\begin{eqnarray} \label{TermI}
\nonumber\lefteqn{|\mbox{\rm I}| = |\text{Re}( \langle D_\tau e^n, e_\gamma^n\rangle)| }\\
\nonumber&\leq& |\text{Im}(\langle \nabla e^{n+1/2}, \nabla e_\gamma^n)\rangle |+|\text{Im}\langle Ve^{n+1/2},e^n_\gamma\rangle|+ |\text{Im}(\langle T^n, e_\gamma^n\rangle )| \nonumber \\ 
&\lesssim& \|\nabla e^{n+1/2} \|^2+ \|\nabla e^n_\gamma\|^2+ \|V\|_\infty(\|e^{n+1/2}\|^2+ \|e^n_\gamma\|^2)+\|e^n_\gamma\|^2+\|T^n\|^2 \nonumber \\
&\lesssim& \|\nabla e^{n+1}\|^2+\|\nabla e^{n}\|^2+\|\nabla e^n_\gamma\|^2+\|e^n_\gamma\|^2+\|T^n\|^2+\tau^4.
\end{eqnarray}
In order to use Gr\"onwall's inequality we need to bound $\|e^n_\gamma\|$ and $\|\nabla e^n_\gamma\|$ in terms of $\|e^n\|$,$\|\nabla e^n\|$ and terms of $\mathcal{O}(\tau^2)$. These bounds are formulated in the two following lemmas.  

\begin{lemma}
\label{lemma-3.4}
Given the optimal $L^2$-convergence of Theorem \ref{main-theorem-1} and the uniform bounds \eqref{bounds-semi-discrete}, the error coming from the nonlinear term behaves as $\tau^2$, i.e. $\|e^n_\gamma\| \lesssim \tau^2 $. 
\begin{proof}
	We introduce the function $f$ to denote how $\gamma(\xi^n)$ depends on $|u^n|^2$ and $|u^{n+1}|^2$
	\begin{eqnarray*}
	f(a,b) = \frac{1}{b-a}\int_a^b\gamma(r)dr.
\end{eqnarray*}
The derivative of $f$ with respect to to its $i$-th input is denoted $f_i$, i.e. $ f_1(a,b)=\partial_a f(a,b)$. A standard application of the mean value theorem yields that, for some function $ \eta^n $ taking values between $|u^n|^2$ and $|u^n_\tau|^2$ a.e. and for some function $\eta^{n+1} $ likewise between $|u^{n+1}|^2$ and $|u^{n+1}_\tau|^2$, it holds:
\begin{eqnarray*}
	\lefteqn{e^n_\gamma = (\gamma(\xi^n)-\gamma(\xi^n_\tau))u^{n+1/2} + \gamma(\xi^n_\tau)(u^{n+1/2}-u^{n+1/2}_\tau)} \\ 
	&=& (f(|u^n|^2,|u^{n+1}|^2)-f(|u^n_\tau|^2,|u^{n+1}_\tau|^2) ) u^{n+1/2}+\gamma(\xi^n_\tau)e^{n+1/2} \\
	&=& \big(f_1(\eta^n,\eta^{n+1})(|u^n|^2-|u^n_\tau|^2)+f_2(\eta^n,\eta^{n+1})(|u^{n+1}|^2-|u^{n+1}_\tau|^2)  \big)u^{n+1/2}+\gamma(\xi^n_\tau)e^{n+1/2}.
\end{eqnarray*}
 A quick sanity check shows that $f_1$ and $f_2$ are bounded by the derivative of $\gamma$:
\begin{eqnarray*}
	|f_1(a,b)|&=& | \frac{1}{b-a}(\gamma(c)-\gamma(a))| = |\gamma'(\theta^-)\frac{c-a}{b-a}|\\
	|f_2(a,b)|&=& |\frac{1}{b-a}(\gamma(b)-\gamma(c)) | =| \gamma'(\theta^+)\frac{b-c}{b-a}| ,
\end{eqnarray*}
where $c, \theta^-$ and $\theta^+$ lie somewhere between $a$ and $b$.
With the $L^\infty$-bounds on $u^n$ and $u^n_\tau$ it is now straightforward to show $\|e^n_\gamma\|\leq C_{\gamma'}( \|e^n\|+\|e^{n+1}\|)$:
\begin{eqnarray*}
	\lefteqn{\|e^n_\gamma\| \leq  \|f_1(\eta^n,\eta^{n+1})\|_{L^\infty} \|u^{n+1/2}\|_{L^\infty}\| |u^n|^2-|u^n_\tau|^2\|} \\
	& & \qquad + \|f_2(\eta^n,\eta^{n+1})\|_{L^\infty} \|u^{n+1/2}\|_{L^\infty}\| |u^{n+1}|^2-|u^{n+1}_\tau|^2\| \\
	& & \qquad + \|\gamma(\xi^n_\tau)\|_{L^\infty} \|e^{n+1/2}\|.
\end{eqnarray*}
As $\| |u^n|^2-|u^n_\tau|^2\| \leq \| |u^n|+|u^{n+1}|\|_{L^\infty} \|e^n\|$, it now follows that 
\[\|e^n_\gamma\| \lesssim \|e^{n+1}\|+\|e^n\| \lesssim \tau^2, \]
and the lemma is proved.
 \end{proof}
\end{lemma}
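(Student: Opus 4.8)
The plan is to reduce $\|e^n_\gamma\|$ to the $L^2$-error $\|e^n\|$, which Theorem~\ref{main-theorem-1} already controls at order $\tau^2$. First I would split $e^n_\gamma$ additively into a contribution from the difference of the effective nonlinear coefficients and a contribution from the midpoint state error,
\begin{align*}
e^n_\gamma = \bigl(\gamma(\xi^n)-\gamma(\xi^n_\tau)\bigr)\,u^{n+1/2} + \gamma(\xi^n_\tau)\,\bigl(u^{n+1/2}-u^{n+1/2}_\tau\bigr),
\end{align*}
writing $e^{n+1/2}=u^{n+1/2}-u^{n+1/2}_\tau$ in the second summand. That second summand is immediately harmless: since $\xi^n_\tau$ takes values in a range fixed by the uniform $L^\infty$-bound~\eqref{bounds-semi-discrete} and $\gamma$ is continuous, $\|\gamma(\xi^n_\tau)\|_{L^\infty}\le C$, so this term is dominated by $C\|e^{n+1/2}\|\le \tfrac{C}{2}(\|e^n\|+\|e^{n+1}\|)$.

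The heart of the matter is the coefficient difference. The key observation is that both effective coefficients are values of the single function $f(a,b):=(b-a)^{-1}\int_a^b\gamma(r)\,dr$, namely $\gamma(\xi^n)=f(|u^n|^2,|u^{n+1}|^2)$ and $\gamma(\xi^n_\tau)=f(|u^n_\tau|^2,|u^{n+1}_\tau|^2)$. Since $\gamma\in C^2$, $f$ extends to a $C^1$ function across the diagonal $a=b$, so I can apply the (pointwise, multivariate) mean value theorem to express the coefficient difference as $f_1(\eta^n,\eta^{n+1})(|u^n|^2-|u^n_\tau|^2)+f_2(\eta^n,\eta^{n+1})(|u^{n+1}|^2-|u^{n+1}_\tau|^2)$, with intermediate functions $\eta^n,\eta^{n+1}$ taking values between $|u^n|^2,|u^n_\tau|^2$ and between $|u^{n+1}|^2,|u^{n+1}_\tau|^2$ respectively. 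Writing $f_1(a,b)=(b-a)^{-1}(\gamma(c)-\gamma(a))$ for an intermediate $c$ and applying the mean value theorem once more shows $|f_1|,|f_2|\le\|\gamma'\|_{L^\infty(J)}$ on the relevant interval $J$; by~\eqref{bounds-semi-discrete} and the regularity of $u$ this $J$ lies in a fixed compact set, so $\|f_1(\eta^n,\eta^{n+1})\|_{L^\infty},\|f_2(\eta^n,\eta^{n+1})\|_{L^\infty}\le C$ uniformly in $n$.

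It then remains to convert the density differences into the state error. Factoring $|u^n|^2-|u^n_\tau|^2=(|u^n|-|u^n_\tau|)(|u^n|+|u^n_\tau|)$ and using the reverse triangle inequality $\bigl||u^n|-|u^n_\tau|\bigr|\le|u^n-u^n_\tau|=|e^n|$ with the uniform $L^\infty$-bounds gives $\||u^n|^2-|u^n_\tau|^2\|\le\||u^n|+|u^n_\tau|\|_{L^\infty}\|e^n\|\le C\|e^n\|$, and likewise at level $n+1$. Since $\|u^{n+1/2}\|_{L^\infty}\le C$, multiplying through and collecting terms yields $\|e^n_\gamma\|\le C(\|e^n\|+\|e^{n+1}\|)$, whence the optimal $L^2$-estimate $\sup_n\|e^n\|\lesssim\tau^2$ from Theorem~\ref{main-theorem-1} gives $\|e^n_\gamma\|\lesssim\tau^2$. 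The only genuinely delicate point I expect is the uniform-in-$n$ bound on $f_1,f_2$: it is here that one must lean on the $L^\infty(\mathcal{D})$-part of~\eqref{bounds-semi-discrete} for the discrete solution, without which the Lipschitz constant of $\gamma$ could grow along the trajectory; the $C^1$-extension of $f$ across the diagonal $|u^{n+1}|=|u^n|$, which uses $\gamma\in C^2$, is a secondary technical check.
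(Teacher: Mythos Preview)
Your proposal is correct and follows essentially the same route as the paper: the same additive splitting of $e^n_\gamma$, the same auxiliary function $f(a,b)=(b-a)^{-1}\int_a^b\gamma(r)\,dr$ with the mean value theorem applied to $f$, the same bound $|f_i|\le\|\gamma'\|_{L^\infty}$, and the same reduction of $\||u^n|^2-|u^n_\tau|^2\|$ to $C\|e^n\|$ via the $L^\infty$-bounds. Your explicit remark that $f$ extends $C^1$ across the diagonal (which is where $\gamma\in C^2$ enters) is a point the paper leaves implicit.
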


\begin{lemma}
\label{lemma-3.5}
Given Theorem \ref{main-theorem-1}, the gradient of the error coming from the nonlinear term is bounded as $\|\nabla e^n_\gamma\| \lesssim \| \nabla e^{n+1}  \|+\|\nabla e^n\|+\tau^2.$
\begin{proof}
The steps are much the same as in lemma \eqref{lemma-3.4}, with the exception that we need to use $ W^{1,\infty}$-bounds on $u^n$ \eqref{W1infty}, which are not available for $u^n_\tau$. We begin by splitting $\nabla e^n_\gamma$ into terms so that the previous lemma may be used.
	\begin{eqnarray*}
		\lefteqn{\nabla e^n_\gamma = \nabla [\gamma(\xi^n)u^{n+1/2}-\gamma(\xi^n_\tau) u^{n+1/2}_\tau] } \\
		&=&\nabla \gamma(\xi^n)u^{n+1/2}+\gamma(\xi^n)\nabla u^{n+1/2} -\nabla \gamma(\xi^n_\tau)u^{n+1/2}_\tau-\gamma(\xi^n_\tau)\nabla u^{n+1/2}_\tau \\
		&=& u^{n+1/2}_\tau \nabla (\gamma(\xi^n)-\gamma(\xi^n_\tau))+\nabla \gamma(\xi^n)(u^{n+1/2}-u^{n+1/2}_\tau) + (\gamma(\xi^n)-\gamma(\xi^n_\tau))\nabla u^{n+1/2} \\
		&\enspace& \quad + \gamma(\xi^n_\tau)\nabla(u^{n+1/2}-u^{n+1/2}_\tau)
	\end{eqnarray*}
By the previous lemma and the $W^{1,\infty}$-bound on $u^n$ we may conclude, \begin{eqnarray} \label{partial_result}
	\|\nabla e^n_\gamma\| \leq \|u^{n+1/2}_\tau\|_{L^\infty} \|\nabla (\gamma(\xi^n)-\gamma(\xi^n))\| + \|\nabla \gamma(\xi^n)\|_{L^\infty}\|e^{n+1/2}\|\\ 
	+\|\nabla u^{n+1/2}\|_{L^\infty}\tau^2 +\|\gamma(\xi^n_\tau)\|_{L^\infty}\|\nabla e^{n+1/2}\| \nonumber.
\end{eqnarray}
	What is left to bound is the term $\|\nabla (\gamma(\xi^n)-\gamma(\xi^n_\tau))\|$. We consider its dependence on $|u^n|^2,|u^{n+1}|^2,|u^n_\tau|^2$ and $|u^{n+1}_\tau|^2$:
	\begin{eqnarray} \label{second_lemma}
	\lefteqn{\nabla(\gamma(\xi^n)-\gamma(\xi^n_\tau)) }\nonumber \\
	&=& 	\nabla(f(|u^n|^2,|u^{n+1}|^2)-f(|u^n_\tau|^2,|u^{n+1}_\tau|^2)) \nonumber \\
	&=& f_1\nabla|u^n|^2+f_2\nabla|u^{n+1}|^2-f_{1,\tau}\nabla|u^n_\tau|^2-f_{2,\tau}\nabla|u^{n+1}_\tau|^2 \nonumber \\
	&=&	(f_1-f_{1,\tau})\nabla|u^n|^2 + f_{1,\tau}\nabla(|u^n|^2-|u^n_\tau|^2)  \\
	&& + (f_2-f_{2,\tau})\nabla|u^{n+1}|^2+f_{2,\tau}\nabla(|u^{n+1}|^2-|u^{n+1}_\tau|^2). \nonumber
	\end{eqnarray}
	Where $f_1$ is to be read as $f_1(|u^n|^2,|u^{n+1}|^2)$ and $f_{1,\tau}$ as $f_1(|u^n_\tau|^2,|u^{n+1}_\tau|^2)$ and likewise for $f_2$ and $f_{2,\tau}$. Another application of the mean value theorem yields:
	\begin{eqnarray*}
		f_1-f_{1,\tau} = f_{1,1}(\theta^n,\theta^{n+1})(|u^n|^2-|u^n_\tau|^2) + f_{1,2}(\theta^n,\theta^{n+1})(|u^{n+1}|^2-|u^{n+1}_\tau|^2)  \\
			f_2-f_{2,\tau} = f_{2,1}(\vartheta^n,\vartheta^{n+1})(|u^n|^2-|u^n_\tau|^2) + f_{2,2}(\vartheta^n,\vartheta^{n+1})(|u^{n+1}|^2-|u^{n+1}_\tau|^2) ,
	\end{eqnarray*}
for some $\theta^n$, $\vartheta^n$ between $|u^n|^2$ and $|u^n_\tau|^2$ and some $\theta^{n+1}$, $\vartheta^{n+1}$ between $|u^{n+1}|^2$ and $|u^{n+1}_\tau|^2$. The following quick calculations show that the partial derivatives of $f$ of order two are bounded by $\gamma''$.
	\begin{eqnarray*}
		f_{1,1}(a,b) &=& \frac{1}{(b-a)^2}2(\gamma(c)-\gamma(a))-\frac{\gamma'(a)}{b-a}= \frac{\gamma'(\theta^-)-\gamma'(a)}{b-a}+C_{\gamma''}\\
		f_{1,2}(a,b) &=& \frac{1}{(b-a)^2}(\gamma(b)-2\gamma(c)+\gamma(a)) = \gamma''(c)+C_{\gamma''}\\
		f_{2,2}(a,b) &=& \frac{\gamma'(b)}{b-a}-\frac{1}{(b-a)^2}2(\gamma(b)-\gamma(c))= \frac{\gamma'(b)-\gamma'(\theta^+)}{b-a}+C_{\gamma''}
	\end{eqnarray*}
Where it was used that $c = (b+a)/2+ C_{\gamma''}(a-b)^2$. 
It thus becomes clear that $\|f_{1,1}+ f_{1,2}+f_{2,2}\|_{L^\infty} \leq C_{\gamma''}$. This gives us the following $L^2$-bound on \eqref{second_lemma} 
\begin{eqnarray*}
	\|\nabla (\gamma(\xi^n)-\gamma(\xi^n_\tau))\| \leq C_{\gamma''}(\|e^n\|+\|e^{n+1}\|)+C_{\gamma'}\nabla(|u^{n}|^2-|u^{n}_\tau|^2+|u^{n+1}|^2-|u^{n+1}_\tau|^2).
\end{eqnarray*}
Continuing from \eqref{partial_result}, we now conclude that:
	\begin{eqnarray} \label{last_step}
		\|\nabla e^n_\gamma \| \lesssim \|\nabla(|u^n|^2-|u^n_\tau|^2)\|+\|\nabla(|u^{n+1}|^2-|u^{n+1}_\tau|^2)\|+\tau^2.
		\end{eqnarray}
It is noted that $\nabla(|u^n|^2-|u^n_\tau|^2)$ may be written
\begin{eqnarray*}
\nabla(|u^n|^2-|u^n_\tau|^2) = 2\text{\normalfont Re}( (u^n-u^n_\tau)\nabla \overline{u}^n+u^n_\tau \nabla(\overline{u}^n-\overline{u}^n_\tau)).
\end{eqnarray*}
Using the $W^{1,\infty}$-bound available for $u^n$ we have that $\|\nabla(|u^n|^2-|u^n_\tau|^2)\|\lesssim \|\nabla (u^n-u^n_\tau)\| $. With this, eq. \eqref{last_step} becomes $\|\nabla e^n_\gamma\| \lesssim \|\nabla e^n\|+\|\nabla e^{n+1}\|+\tau^2$ and the lemma is proved.
\end{proof}
\end{lemma}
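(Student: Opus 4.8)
The plan is to differentiate $e^n_\gamma = \gamma(\xi^n)u^{n+1/2} - \gamma(\xi^n_\tau)u^{n+1/2}_\tau$ by the product rule and then reorganize the resulting terms so that every gradient is either one we are allowed to keep, namely $\nabla e^{n+1/2}$ (which is $\lesssim \|\nabla e^n\| + \|\nabla e^{n+1}\|$ by definition of $e^{n+1/2}$), or else one that sits against a factor that is $\mathcal{O}(\tau^2)$ in $L^2$. Concretely I would write
\[
\nabla e^n_\gamma = u^{n+1/2}_\tau\,\nabla(\gamma(\xi^n)-\gamma(\xi^n_\tau)) + (\nabla\gamma(\xi^n))\,e^{n+1/2} + (\gamma(\xi^n)-\gamma(\xi^n_\tau))\,\nabla u^{n+1/2} + \gamma(\xi^n_\tau)\,\nabla e^{n+1/2}.
\]
The second and third terms are immediate: $\nabla\gamma(\xi^n)$ and $\nabla u^{n+1/2}$ are bounded in $L^\infty$ by the $W^{1,\infty}$-regularity \eqref{W1infty} of $u$ and the smoothness of $\gamma$, while $\|e^{n+1/2}\| \lesssim \tau^2$ (Theorem \ref{main-theorem-1}) and $\|\gamma(\xi^n)-\gamma(\xi^n_\tau)\| \lesssim \tau^2$ (the scalar mean-value estimate that already appears inside the proof of Lemma \ref{lemma-3.4}). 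The fourth term contributes $\|\nabla e^{n+1/2}\|$ directly, since $\gamma(\xi^n_\tau)$ is bounded in $L^\infty$, as is $u^{n+1/2}_\tau$, by \eqref{bounds-semi-discrete}. The structural point I want to stress is that this grouping confines the only uncontrollable gradient, $\nabla u^{n+1/2}_\tau$, to the combination $\nabla e^{n+1/2}$; it never appears as an $L^\infty$-factor. This is exactly what makes the argument go through even though no $W^{1,\infty}$-bound on $u_\tau$ is available, and it is the feature that distinguishes this estimate from the purely $L^2$-estimate of Lemma \ref{lemma-3.4}.

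What remains is the first term, $\|\nabla(\gamma(\xi^n)-\gamma(\xi^n_\tau))\|$. Writing $\gamma(\xi^n)=f(|u^n|^2,|u^{n+1}|^2)$ and $\gamma(\xi^n_\tau)=f(|u^n_\tau|^2,|u^{n+1}_\tau|^2)$ with $f(a,b)=(b-a)^{-1}\int_a^b\gamma$, I would apply the chain rule to each and then telescope the difference into ``coefficient differences'' of the form $(f_i - f_{i,\tau})\nabla|u^n|^2$ and ``argument differences'' of the form $f_{i,\tau}\,\nabla(|u^n|^2-|u^n_\tau|^2)$, together with their $n+1$ analogues. For the coefficient differences I would invoke the mean value theorem a second time, producing the second-order partials $f_{1,1},f_{1,2},f_{2,2}$ multiplied by the density differences $|u^n|^2-|u^n_\tau|^2$ or $|u^{n+1}|^2-|u^{n+1}_\tau|^2$; the latter are $\lesssim\|e^n\|+\|e^{n+1}\|\lesssim\tau^2$ by Lemma \ref{lemma-3.4} and the $L^\infty$-bounds, and they are paired with $\nabla|u^n|^2$, which is bounded in $L^\infty$ by \eqref{W1infty}. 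For the argument differences the first-order partials $f_{i,\tau}$ are bounded by $\|\gamma'\|_\infty$, so what survives is $\|\nabla(|u^n|^2-|u^n_\tau|^2)\|$; expanding $\nabla|u^n|^2-\nabla|u^n_\tau|^2 = 2\,\mathrm{Re}\!\left((u^n-u^n_\tau)\nabla\overline{u}^n + u^n_\tau\nabla(\overline{u}^n-\overline{u}^n_\tau)\right)$ and using the $W^{1,\infty}$-bound on $u^n$ with the $L^\infty$-bound on $u^n_\tau$ gives $\lesssim \|\nabla e^n\|+\tau^2$. Collecting all the pieces yields the claimed estimate.

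The step I expect to be the main obstacle is verifying that the second-order partials $f_{1,1},f_{1,2},f_{2,2}$ are genuinely bounded by $\|\gamma''\|_\infty$ uniformly in $(a,b)$. The difficulty is that their explicit formulas carry factors $(b-a)^{-1}$ and $(b-a)^{-2}$ and therefore look singular as $b\to a$, that is, precisely when two consecutive densities coincide. The resolution is a careful Taylor expansion of $\gamma$ about the midpoint $(a+b)/2$: writing the mean-value point $c$ as $c=(a+b)/2+\mathcal{O}((a-b)^2)$, one checks that the apparent singularities cancel and each second-order partial reduces to $\gamma''$ evaluated at an intermediate point plus a bounded remainder. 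Once this uniform bound is secured, everything else is routine bookkeeping built out of the mean value theorem, Lemma \ref{lemma-3.4}, and the $L^\infty$- and $W^{1,\infty}$-bounds, with the essential care being to keep $\nabla u_\tau$ confined to the $\nabla e^{n+1/2}$ factors throughout.
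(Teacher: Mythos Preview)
Your proposal is correct and follows essentially the same route as the paper: the identical four-term decomposition of $\nabla e^n_\gamma$, the same telescoping of $\nabla(\gamma(\xi^n)-\gamma(\xi^n_\tau))$ via the chain rule for $f(a,b)=(b-a)^{-1}\int_a^b\gamma$, the same mean-value reduction to second-order partials $f_{1,1},f_{1,2},f_{2,2}$, and the same Taylor-at-midpoint argument to show these are uniformly bounded by $\gamma''$. Your emphasis on confining $\nabla u^{n+1/2}_\tau$ to the combination $\nabla e^{n+1/2}$ is exactly the structural point that makes the argument work, and your identification of the apparent $(b-a)^{-1}$ singularity in the second-order partials as the main obstacle---resolved via $c=(a+b)/2+\mathcal{O}((a-b)^2)$---matches the paper's treatment precisely.
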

With Lemma \ref{lemma-3.4} and \ref{lemma-3.5} we now have the following bound on term I.
\begin{lemma}
\label{estimate-term-I}
For term $\mbox{\rm I}$ which is given by \eqref{TermI}, we have the estimate
\begin{equation} \label{BndI}
|\mbox{\rm I}|\hspace{3pt}
\lesssim \hspace{3pt} \|\nabla e^{n+1} \|^2+\|\nabla e^n \|^2 + \|T^n\|^2+\tau^4.
\end{equation}
\end{lemma}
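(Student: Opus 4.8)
The plan is to obtain the claimed estimate by simply inserting the two preceding lemmas into the bound \eqref{TermI} that has already been derived. Recall that \eqref{TermI} gives
\[
|\mbox{\rm I}| \;\lesssim\; \|\nabla e^{n+1}\|^2+\|\nabla e^{n}\|^2+\|\nabla e^n_\gamma\|^2+\|e^n_\gamma\|^2+\|T^n\|^2+\tau^4,
\]
so the only quantities that still need to be controlled in terms of $\|\nabla e^{n+1}\|$, $\|\nabla e^{n}\|$ and data of order $\tau^2$ are $\|e^n_\gamma\|^2$ and $\|\nabla e^n_\gamma\|^2$.

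For the first term, Lemma \ref{lemma-3.4} yields $\|e^n_\gamma\| \lesssim \tau^2$, hence $\|e^n_\gamma\|^2 \lesssim \tau^4$, which is absorbed into the $\tau^4$ contribution already present. For the second term, Lemma \ref{lemma-3.5} yields $\|\nabla e^n_\gamma\| \lesssim \|\nabla e^{n+1}\| + \|\nabla e^n\| + \tau^2$; squaring this inequality and using $(a+b+c)^2 \le 3(a^2+b^2+c^2)$ (the constant being absorbed into $\lesssim$) gives $\|\nabla e^n_\gamma\|^2 \lesssim \|\nabla e^{n+1}\|^2 + \|\nabla e^n\|^2 + \tau^4$. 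Substituting both estimates back into \eqref{TermI} and collecting terms produces exactly \eqref{BndI}.

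In this step there is no genuine obstacle: all of the analytic work — the mean-value-theorem expansions of the difference quotient $f$, the use of the uniform $L^\infty$-bounds on $u^n$ and $u^n_\tau$ from \eqref{bounds-semi-discrete}, and the $W^{1,\infty}$-regularity \eqref{W1infty} of the exact solution (needed because the analogous bound is unavailable for $u^n_\tau$) — has already been carried out in Lemmas \ref{lemma-3.4} and \ref{lemma-3.5}, with the more delicate part being the second-order mean-value expansions in Lemma \ref{lemma-3.5}. The present lemma is merely the bookkeeping step that packages those bounds into the shape required for the forthcoming Grönwall argument starting from \eqref{H1-Reccurence}, namely a bound on $|\mbox{\rm I}|$ by $\|\nabla e^{n+1}\|^2$, $\|\nabla e^{n}\|^2$, the consistency contribution $\|T^n\|^2$ (summable and of the right order by \eqref{bounds-for-Tk}), and a remainder of size $\mathcal{O}(\tau^4)$.
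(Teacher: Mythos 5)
Your proposal is correct and coincides with the paper's own (implicit) argument: the paper likewise obtains \eqref{BndI} by inserting Lemma \ref{lemma-3.4} ($\|e^n_\gamma\|\lesssim\tau^2$) and Lemma \ref{lemma-3.5} ($\|\nabla e^n_\gamma\|\lesssim\|\nabla e^{n+1}\|+\|\nabla e^n\|+\tau^2$) into the already-derived bound \eqref{TermI}. Nothing further is needed.
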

We can now proceed to bound term II. Here we explicate the Taylor term using \eqref{Taylor} to see
\begin{eqnarray*}
\lefteqn{\mbox{\rm II} = - \text{Re}(\langle T^n , D_\tau e^n \rangle)}  \\
 &\leq& \underbrace{ | \langle( D_\tau u^n - \partial_t u(t_{n+1/2})),D_\tau e^n\rangle|}_{\mbox{\rm IIa}}+\underbrace{\text{Re}\langle -\Delta( u^{n+1/2}-u(t_{n+1/2})),D_\tau e^n\rangle}_{\mbox{\rm IIb}} \\
&\enspace& +\underbrace{|\langle V(u^{n+1/2}-u(t_{n+1/2})),D_\tau e^n\rangle|}_{\mbox{\rm IIc}} \\
&\enspace&+\underbrace{|\langle \bigg( \frac{\Gamma(|u^{n+1}|^2)-\Gamma(|u^n|^2)}{|u^{n+1}|^2-|u^n|^2}-\gamma(|u(t_{n+1/2})|^2),D_\tau e^n\rangle \bigg)|}_{\mbox{\rm IId}}.
\end{eqnarray*}
We start with estimating $\mbox{\rm IIa},\mbox{\rm IIc}$ and $\mbox{\rm IId}$, which can be bounded in a similar way. \\
\it Step 1, bounding $\mbox{\rm IIa}$: \\ \normalfont
By replacing $D_\tau e^n$ using \eqref{error_pde}  (i.e. time derivative is replaced by regularity in space) we have
\begin{eqnarray*}
\lefteqn{|\langle D_\tau u^n-\partial_t u(t_{n+1/2}) , D_\tau e^n \ \rangle |}\\
&\leq&  |\langle D_\tau u^n-\partial_t u(t_{n+1/2}) ,\Delta e^{n+1/2}\rangle|+|\langle D_\tau u^n-\partial_t u(t_{n+1/2}) ,V e^{n+1/2}\rangle|  \\ 
&\enspace& \quad + \enspace | \langle D_\tau u^n-\partial_t u(t_{n+1/2}) , e_\gamma^n\rangle | +  | \langle D_\tau u^n-\partial_t u(t_{n+1/2}) ,T^n \rangle | \\
&\overset{\eqref{bounds-for-Tk}}{\lesssim}& \|\nabla e^{n+1}\|^2 + \|\nabla e^n\|^2 + \|D_\tau u^n-\partial_t u(t_{n+1/2})\|_{H^1}^2 + \|T^n\|^2 +\tau^4 .
\end{eqnarray*}
 The term $\|D_\tau u^n-\partial_t u(t_{n+1/2})\|_{L^2}^2$ was absorbed in $\lesssim \|T^n\|^2 $. Here we see how the assumption that $\partial_{ttt}u\in L^2(0,T;H^1(\mathcal{D}))$ will enter, as it will allow us to conclude that $\sum \|D_\tau u^n-\partial_t u(t_{n+1/2})\|_{H^1}^2\lesssim \tau^3$. \\
\it Step 2, bounding $\mbox{\rm IIc}$: \\ \normalfont
We use the same idea as for $\mbox{\rm IIa}$ to get
\begin{eqnarray*}
\lefteqn{|\langle V(u^{n+1/2}-u(t_{n+1/2})),D_\tau e^n\rangle| \leq \|V\|_{L^\infty}|\langle u^{n+1/2}-u(t_{n+1/2}),D_\tau e^n\rangle|} \\
&\leq& \|V\|_{L^\infty}\big( |\langle u^{n+1/2}-u(t_{n+1/2}) ,\Delta e^{n+1/2}\rangle|+\\
&\enspace& \qquad +|\langle u^{n+1/2}-u(t_{n+1/2}) ,V e^{n+1/2}\rangle| +|   \langle u^{n+1/2}-u(t_{n+1/2}) , e_\gamma^n\rangle | \\ 
&\enspace& \qquad +   | \langle u^{n+1/2}-u(t_{n+1/2}) ,T^n \rangle | \big)  \\
&\lesssim& \|\nabla e^{n+1}\|^2 + \|\nabla e^n\|^2 + \|u^{n+1/2}-u(t_{n+1/2})\|_{H^1}^2+ \|T^n\|^2+\tau^4.
\end{eqnarray*}
\it Step 3, bounding \mbox{\rm IId}: \\ 
\normalfont
We start from
\begin{align*}
\big|\big\langle\frac{\Gamma(|u^{n+1}|^2)-\Gamma(|u^n|^2)}{|u^{n+1}|^2-|u^n|^2}-\gamma(|u(t_{n+1/2})|^2),D_\tau e^n\big\rangle \big| = & |\langle \gamma(\xi^n)-\gamma(|u(t_{n+1/2})|^2), D_\tau e^n\rangle|
\end{align*}
and replace $D_\tau e^n$ again using \eqref{error_pde}. Furthermore, in virtue of the assumptions  it holds that $\|\nabla(\gamma(\xi^n)-\gamma(|u(t_{n+1/2})|^2))\| \lesssim \|u^{n+1/2}-u(t_{n+1/2})\|_{H^1}$, this is made explicit in the Appendix \eqref{consistency-error-appendix}.   We thus obtain 
\begin{eqnarray*}
\lefteqn{|\langle \gamma(\xi^n)-\gamma(|u(t_{n+1/2})|^2), D_\tau e^n\rangle|} \\
&=& |\langle \gamma(\xi^n)-\gamma(|u(t_{n+1/2})|^2), -\Delta e^{n+1/2} + Ve^{n+1/2}+e^n_\gamma +T^n \rangle|  \\
&\leq& |\langle \nabla(\gamma(\xi^n)-\gamma(|u(t_{n+1/2})|^2)),\nabla e^{n+1/2}\rangle | +\|T^n\|^2 + \mathcal{O}(\tau^4) \\
&\lesssim & \|\nabla e^{n+1}\|^2+\|\nabla e^n\|^2+ \|T^n\|^2+\|u^{n+1/2}-u(t_{n+1/2})\|_{H^1}^2+ \tau^4.
\end{eqnarray*}
\it Step 4, bounding $\mbox{\rm IIb}$: \\ \normalfont 
The previous technique does not work on this term since replacing the discrete time derivative with regularity in space would give rise to the term $\nabla \Delta (u^{n+1/2}-u(t_{n+1/2}))$, which we can not afford. Instead we use summation by parts in time to get the factor $D_\tau \Delta (u^{n+1/2}-u(t_{n+1/2}))$, which when integrated against $e^{n+1/2}$ can be handled.
First we recall:
\[D_\tau[a^k b^k] =  a^kD_\tau b^k + b^{k+1}D_\tau a^k \quad \Longleftrightarrow \quad -a^kD_\tau b^k = D_\tau [a^k] b^{k+1} - D_\tau[a^kb^k]. \]
Using this on term $\mbox{\rm IIb}$ yields:
\begin{eqnarray*}
\lefteqn{\langle -\Delta (u^{n+1/2}-u), D_\tau e^n\rangle  = \langle \nabla (u^{n+1/2}-u),D_\tau \nabla e^n \rangle} \\
& =& D_\tau [\langle \nabla (u^{n+1/2}-u(t_{n+1/2})), \nabla e^{n}\rangle ] - \langle D_\tau \nabla(u^{n+1/2}-u(t_{n+1/2})),\nabla e^{n+1}\rangle \\
& \leq & D_\tau [\langle \nabla (u^{n+1/2}-u(t_{n+1/2})), \nabla e^{n}\rangle ] +  |\langle D_\tau \nabla(u^{n+1/2}-u(t_{n+1/2})),\nabla e^{n+1}\rangle| \\
&\leq& D_\tau [\langle \nabla (u^{n+1/2}-u(t_{n+1/2})), \nabla e^{n}\rangle ] + \|D_\tau (u^{n+1/2}-u(t_{n+1/2}))\|_{H^1}^2 + \|\nabla e ^{n+1}\|^2.
\end{eqnarray*} 
Collecting the estimates we have the following estimate for term $\mbox{\rm II}$.
\begin{lemma}
\label{estimate-term-II}
For term $\mbox{\rm II} = -\text{Re}(\langle T^n , D_\tau e^n \rangle)$ it holds the estimate
\begin{eqnarray} \label{BndII}
\mbox{\rm II}  \hspace{3pt}
\le \hspace{3pt}  
D_\tau [\langle \nabla (u^{n+1/2}-u(t_{n+1/2})), \nabla e^{n}\rangle ] + C \big( \|\nabla e ^{n+1}\|^2 +\|\nabla e^n\|^2 + \tau^4 +\|T^n\|^2+ \nonumber \\
 + \|D_\tau u^n-\partial_t u(t_{n+1/2})\|_{H^1}^2 +\|u^{n+1/2}-u(t_{n+1/2})\|_{H^1}^2\big). \nonumber \\
\end{eqnarray}
\end{lemma}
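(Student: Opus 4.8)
The plan is to estimate $\mathrm{II}$ by splitting the consistency error $T^n$ along its definition \eqref{Taylor} into a time-stepping part, a Laplacian part, a potential part and a nonlinear part, so that $\mathrm{II}\le\mathrm{IIa}+\mathrm{IIb}+\mathrm{IIc}+\mathrm{IId}$ with the four contributions isolated above, and then to bound each of them. Three of these — $\mathrm{IIa}$, $\mathrm{IIc}$, $\mathrm{IId}$ — are $L^2$-pairings of a smooth $\mathcal{O}(\tau^2)$ consistency factor $g^n$ (namely $D_\tau u^n-\partial_t u(t_{n+1/2})$; then $u^{n+1/2}-u(t_{n+1/2})$ after first pulling $\|V\|_{L^\infty}$ out in front; and $\gamma(\xi^n)-\gamma(|u(t_{n+1/2})|^2)$, respectively) against $D_\tau e^n$, and I would treat all three uniformly: replace $D_\tau e^n$ via the error identity \eqref{error_pde}, i.e. use $\ci\,D_\tau e^n=-\Delta e^{n+1/2}+Ve^{n+1/2}+e^n_\gamma+T^n$, which exchanges the awkward discrete time derivative for spatially controlled quantities. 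The $-\Delta e^{n+1/2}$ piece is then integrated by parts to $\pm\langle\nabla g^n,\nabla e^{n+1/2}\rangle$ — the boundary terms drop because each $g^n$ vanishes on $\partial\mathcal{D}$ (for the $\gamma$-difference since $\gamma(0)=0$ and $u$ has zero trace), and in case $\mathrm{IId}$ one additionally invokes $\|\nabla(\gamma(\xi^n)-\gamma(|u(t_{n+1/2})|^2))\|\lesssim\|u^{n+1/2}-u(t_{n+1/2})\|_{H^1}$, a mean-value estimate resting on the $W^{1,\infty}$-regularity \eqref{W1infty} of $u$, exactly as in Lemma~\ref{lemma-3.5} — while the pairings against $Ve^{n+1/2}$, $e^n_\gamma$ and $T^n$ are dispatched by Cauchy--Schwarz and Young's inequality, using Lemma~\ref{lemma-3.4} ($\|e^n_\gamma\|\lesssim\tau^2$) and simply retaining $\|T^n\|^2$. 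With $\|\nabla e^{n+1/2}\|^2\lesssim\|\nabla e^{n+1}\|^2+\|\nabla e^n\|^2$ (and Poincaré for the $L^2$-norms), each of $\mathrm{IIa}$, $\mathrm{IIc}$, $\mathrm{IId}$ comes out $\lesssim\|\nabla e^{n+1}\|^2+\|\nabla e^n\|^2+\|T^n\|^2+\tau^4+\|g^n\|_{H^1}^2$, the surviving consistency norm $\|g^n\|_{H^1}$ being $\|D_\tau u^n-\partial_t u(t_{n+1/2})\|_{H^1}$ for $\mathrm{IIa}$ and $\|u^{n+1/2}-u(t_{n+1/2})\|_{H^1}$ for $\mathrm{IIc}$ and $\mathrm{IId}$. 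These consistency norms are deliberately kept as named terms; that they are small in the summed sense, $\sum_n\tau\,\|\cdot\|_{H^1}^2\lesssim\tau^3$ (cf. \eqref{bounds-for-Tk}), is a Taylor-expansion matter under the regularity assumptions \eqref{regularity-assumptions} and is relegated to the appendix.

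The term that resists this device is $\mathrm{IIb}=\text{Re}\,\langle-\Delta(u^{n+1/2}-u(t_{n+1/2})),D_\tau e^n\rangle$: substituting \eqref{error_pde} and integrating by parts would here produce a factor $\nabla\Delta(u^{n+1/2}-u(t_{n+1/2}))$, i.e. an $H^3$-bound on $u$ that is not assumed. Instead I would integrate by parts in space, $\mathrm{IIb}=\text{Re}\,\langle\nabla(u^{n+1/2}-u(t_{n+1/2})),D_\tau\nabla e^n\rangle$, and then sum by parts in time via the discrete product rule $\langle a^n,D_\tau b^n\rangle=D_\tau\langle a^n,b^n\rangle-\langle D_\tau a^n,b^{n+1}\rangle$ with $a^n=\nabla(u^{n+1/2}-u(t_{n+1/2}))$ and $b^n=\nabla e^n$. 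This yields the discrete total-difference term $D_\tau[\langle\nabla(u^{n+1/2}-u(t_{n+1/2})),\nabla e^n\rangle]$ — which will telescope once we sum over $n$ in the ensuing Grönwall step — plus a remainder $-\langle D_\tau\nabla(u^{n+1/2}-u(t_{n+1/2})),\nabla e^{n+1}\rangle$ that Young's inequality bounds by $\|D_\tau(u^{n+1/2}-u(t_{n+1/2}))\|_{H^1}^2+\|\nabla e^{n+1}\|^2$, the first factor being one more $\mathcal{O}(\tau^2)$ consistency quantity left for the appendix.

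Adding the four estimates then produces exactly \eqref{BndII}. The only genuinely delicate point is $\mathrm{IIb}$: one must recognise that its discrete time derivative cannot be traded for spatial regularity the way $\mathrm{IIa}$, $\mathrm{IIc}$, $\mathrm{IId}$ allow, that summation by parts in time is the right substitute, and that the price to pay is carrying the discrete-difference term $D_\tau[\langle\nabla(u^{n+1/2}-u(t_{n+1/2})),\nabla e^n\rangle]$ along in the recurrence — which is precisely why \eqref{BndII} is not yet of pure Grönwall form and why this term has to be telescoped away afterwards. The rest is the routine "substitute the error PDE for $D_\tau e^n$, integrate by parts, apply Young" pattern, together with Lemma~\ref{lemma-3.4} and the $W^{1,\infty}$-control of the $\gamma$-consistency term.
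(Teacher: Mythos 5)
Your proposal is correct and follows essentially the same route as the paper: the same splitting of $T^n$ into the four contributions $\mbox{\rm IIa}$--$\mbox{\rm IId}$, the same substitution of the error identity \eqref{error_pde} for $D_\tau e^n$ in $\mbox{\rm IIa}$, $\mbox{\rm IIc}$, $\mbox{\rm IId}$ (with the appendix bound on $\nabla(\gamma(\xi^n)-\gamma(|u(t_{n+1/2})|^2))$ for $\mbox{\rm IId}$), and the same summation by parts in time for $\mbox{\rm IIb}$ producing the telescoping term $D_\tau[\langle\nabla(u^{n+1/2}-u(t_{n+1/2})),\nabla e^n\rangle]$. No gaps; your identification of $\mbox{\rm IIb}$ as the one term that cannot be handled by trading the discrete time derivative for spatial regularity matches the paper's reasoning exactly.
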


\noindent 
Here we note the importance of not estimating the absolute value of the first term since it is necessary to use the fact that $n$ of these terms cancel when summed up, i.e. $\sum_k D_\tau a^k = \frac{1}{\tau}(a^{n+1}-a^0)$. We are now ready to finish the proof of the first main result.
\begin{proof}[Proof of Theorem \ref{MainResult}]
We pick off where we left (\ref{H1-Reccurence}) and find by using Lemma \ref{estimate-term-I} and \ref{estimate-term-II}:
\begin{align*}
\frac{\|\nabla e^{n+1}\|^2 -\|\nabla e^n\|^2}{2\tau} & \leq   D_\tau [\langle \nabla (u^{n+1/2}-u(t_{n+1/2})), \nabla e^{n}\rangle ] +C\big(\|\nabla e^{n+1}\|^2 +\|\nabla e^n\|^2 +\tau^4   \\
&  + \|D_\tau u^n -\partial_t u(t_{n+1/2})\|^2_{H^1}+ \|T^n\|^2  + \|u^{n+1/2}-u(t_{n+1/2})\|^2_{H^1} \big).
\end{align*}
Summing this up and using $e^0=0$ gives
\begin{align*}
\frac{\|\nabla e^{n+1}\|^2}{2\tau} &\leq C\left( \sum_{k=0}^n \|\nabla e^{k}\|^2 \right)  + \frac{1}{\tau}\langle \nabla (u^{n+3/2}-u(t_{n+3/2})),\nabla e^{n+1} \rangle + \\
&+ C\tau^3 + \sum_{k=0}^{n} \|T^k\|^2+\|u^{k+1/2}-u(t_{k+1/2}) \|^2_{H^1}+ \|D_\tau u^k -\partial_t u(t_{k+1/2})\|^2_{H^1}
\end{align*} 
and therefore, recalling \eqref{bounds-for-Tk},
\begin{align*}
\|\nabla e^{n+1}\|^2 &\leq C\left(\sum_{k=0}^n \tau \|\nabla e^{k}\|^2 \right)+ C \tau^4 +|\langle \nabla (u^{n+3/2}-u(t_{n+3/2})),\nabla e^{n+1} \rangle |.
\end{align*} 
Young's inequality with $\epsilon>0$ is used on the last term:
\begin{equation}
|\langle \nabla (u^{n+3/2}-u(t_{n+3/2})),\nabla e^{n+1} \rangle | \leq C( \frac{\tau^4}{\epsilon} + \epsilon \|\nabla e^{n+1}\|^2).
\end{equation}
Which holds since, 
\begin{align*}
\| \nabla (u^{n+3/2}-u(t_{n+3/2}))\| \lesssim \tau^2 \|\partial_{tt}u\|_{L^\infty(H^1)},
\end{align*}
where we have $ \|\partial_{tt}u\|_{L^\infty(H^1)} \lesssim \|\partial_{tt}u\|_{L^2(H^1)} + \|\partial_{ttt}u\|_{L^2(H^1)}$ by Sobolev embeddings.
Finally we arrive at
\begin{align*} 
\|\nabla e^{n+1}\|^2 \leq C\left( \sum_{k=0}^n \tau \|\nabla e^k\|^2 \right) + C \tau ^4 + \frac{\tau^4}{\epsilon} + \epsilon \|\nabla e^{n+1}\|^2 
\end{align*}
and for e.g. $\epsilon=1/2$ we can absorb $ \epsilon \|\nabla e^{n+1}\|^2$ in the left hand side and conclude
\begin{align*}
\|\nabla e^{n+1}\|^2 & \leq C \big( \tau^4 + \sum_{k=0}^n \tau \|\nabla e^k\|^2 \big).
\end{align*}
Gr\"onwall's inequality now yields:
\begin{equation} \label{Convergence}
\|\nabla e^{n+1}\| \lesssim  \tau^2.
\end{equation}
\end{proof}
\section{Fully-discrete Crank-Nicolson scheme}
\label{section-fully-discrete-CN}
We shall now consider the fully-discrete setting that is based on a finite element discretization in space.
For that, we let $S_h\subset H^1_0(\mathcal{D})$ denote the space of P1 Lagrange finite elements on a quasi-uniform simplicial mesh on $\mathcal{D}$ with mesh size $h$. In this setting we have by standard finite element theory (cf. \cite{BrS08}) the following estimate for any $u \in H^1_0(\mathcal{D})\cap H^2(\mathcal{D})$:
\begin{align}\label{ProjectionIneq}
\|u-P_h u\|+h\|\nabla (u-P_h u) \| \lesssim h^2\|u\|_{H^2(\mathcal{D})}.
\end{align} 
Here $P_h: H^1_0(\mathcal{D}) \rightarrow S_h$ denotes (for example) the Ritz-projection into the finite element space. 
For a given discrete initial value $u_h^0\in S_h$ the CN-FEM approximation $u^{n}_{\tau,h} \in S_h$ to $u^{n}$ is given by the fully discrete equation
	\begin{align}\label{FullyDiscrete}
	\ci \big\langle \frac{u^{n+1}_{\tau,h}-u^n_{\tau,h}}{\tau},v\big\rangle = 
	\big\langle\nabla u^{n+1/2}_{\tau,h},\nabla v \big\rangle + \big\langle Vu^{n+1/2}_{\tau,h},v\big\rangle + \left\langle \frac{\Gamma(|u^{n+1}_{\tau,h}|^2)-\Gamma(|u^{n}_{\tau,h}|^2)}{|u^{n+1}_{\tau,h}|^2-|u^n_{\tau,h}|^2}u^{n+1/2}_{\tau,h},v\right\rangle
	\end{align}
for all $v\in S_h$.
The initial value is selected as $ u^0_{\tau,h}=P_hu^0$. As in the semi-discrete case, the discretization conserves both mass and energy, i.e.
\begin{align*}
\int_{\mathcal{D}} | u^n_{\tau,h}|^2 \hspace{2pt} dx =  \int_{\mathcal{D}} |u^0_{\tau,h}|^2 \hspace{2pt} dx \qquad
\mbox{and} \qquad E[u^n_{\tau,h}]  = E[u^0_{\tau,h}] \qquad \mbox{for all } n\ge 0.
\end{align*}

The scheme is well-posed and the corresponding approximations converge in the $L^{\infty}(L^2)$-norm with optimal order in space and time to the exact solution. A proof of this statement can be easily extracted from \cite[Theorem 3.1 and Lemma 5.3]{NonlinearCN}. In particular, we have the following result.

\begin{theorem}\label{main-theorem-2}
Under the general assumptions of this paper, there exists a  solution $u^n_{\tau,h}\in H^1_0(\mathcal{D})$ to the fully discrete Crank-Nicolson scheme \eqref{FullyDiscrete} such that the following a priori error estimates hold
\begin{align*}
\sup_{0\le n \le N} \| u^n_{\tau,h} - u^n_\tau \|_{L^{2}(\mathcal{D})}  \lesssim h^2
\qquad \mbox{and} \qquad
\sup_{0\le n \le N} \| u^n_{\tau,h} - u^n \|_{L^{2}(\mathcal{D})}  \lesssim  h^2+\tau^2.
\end{align*}
\end{theorem}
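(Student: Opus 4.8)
The plan is to run a standard elliptic-projection (Galerkin) error analysis in which $u^n_{\tau,h}$ is compared to the semi-discrete solution $u^n_\tau$ rather than directly to $u^n$; since $\|u^n_\tau-u^n\|\lesssim\tau^2$ by Theorem~\ref{main-theorem-1}, the bound $\sup_n\|u^n_{\tau,h}-u^n\|\lesssim h^2+\tau^2$ will then follow from $\sup_n\|u^n_{\tau,h}-u^n_\tau\|\lesssim h^2$ by the triangle inequality, so the whole task reduces to the latter. First I would split $u^n_{\tau,h}-u^n_\tau=\theta^n+\rho^n$ with $\theta^n:=u^n_{\tau,h}-P_hu^n_\tau\in S_h$ and $\rho^n:=P_hu^n_\tau-u^n_\tau$; by \eqref{ProjectionIneq} together with the uniform $H^2$-bound in \eqref{bounds-semi-discrete} one has $\|\rho^n\|+h\|\nabla\rho^n\|\lesssim h^2$, while $\theta^0=0$ because $u^0_{\tau,h}=P_hu^0=P_hu^0_\tau$. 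Testing the weak form of \eqref{semi-disc-cnd-problem} against $v\in S_h$, subtracting from \eqref{FullyDiscrete}, and using the Galerkin orthogonality of $P_h$, the increment $\theta^n$ satisfies, for all $v\in S_h$,
\[
\ci\langle D_\tau\theta^n,v\rangle=\langle\nabla\theta^{n+1/2},\nabla v\rangle+\langle V\theta^{n+1/2},v\rangle+\langle\mathcal{N}^n_h-\mathcal{N}^n_\tau,v\rangle+\langle V\rho^{n+1/2},v\rangle-\ci\langle D_\tau\rho^n,v\rangle ,
\]
where $\mathcal{N}^n_h$ and $\mathcal{N}^n_\tau$ denote the nonlinear difference-quotient terms of \eqref{FullyDiscrete} and \eqref{semi-disc-cnd-problem}, evaluated at the fully discrete and at the semi-discrete iterates (the term $\langle V\rho^{n+1/2},v\rangle$ disappears if $P_h$ is the Ritz projection with respect to $\langle\nabla\cdot,\nabla\cdot\rangle+\langle V\cdot,\cdot\rangle$, and is in any case harmless).

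Next I would test with $v=\theta^{n+1/2}\in S_h$ and take imaginary parts; the Hermitian terms $\langle\nabla\theta^{n+1/2},\nabla\theta^{n+1/2}\rangle$ and $\langle V\theta^{n+1/2},\theta^{n+1/2}\rangle$ drop out, leaving
\[
\frac{\|\theta^{n+1}\|^2-\|\theta^n\|^2}{2\tau}=\operatorname{Im}\Big(\langle\mathcal{N}^n_h-\mathcal{N}^n_\tau,\theta^{n+1/2}\rangle+\langle V\rho^{n+1/2},\theta^{n+1/2}\rangle-\ci\langle D_\tau\rho^n,\theta^{n+1/2}\rangle\Big) .
\]
The right-hand side I would bound by $C(\|\theta^n\|^2+\|\theta^{n+1}\|^2)+Ch^4+C\|D_\tau\rho^n\|^2$: the $V\rho$-term is $O(h^2)\,\|\theta^{n+1/2}\|$; for the $D_\tau\rho^n$-term one uses $\|D_\tau\rho^n\|=\|(P_h-I)D_\tau u^n_\tau\|\lesssim h^2\|D_\tau u^n_\tau\|_{H^2}$ together with $\sum_k\tau\|D_\tau u^k_\tau\|_{H^2}^2\lesssim1$, a bound obtained by applying $D_\tau$ to \eqref{semi-disc-cnd-problem} and invoking elliptic regularity exactly as in \cite{NonlinearCN}; and the nonlinear term is estimated as in Lemma~\ref{lemma-3.4}, namely $\|\mathcal{N}^n_h-\mathcal{N}^n_\tau\|\lesssim\|u^n_{\tau,h}-u^n_\tau\|+\|u^{n+1}_{\tau,h}-u^{n+1}_\tau\|\lesssim\|\theta^n\|+\|\theta^{n+1}\|+h^2$, \emph{provided} the fully discrete iterates are uniformly bounded in $L^\infty$. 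Multiplying by $2\tau$, summing over $k=0,\dots,n$, using $\theta^0=0$, absorbing the $\|\theta^{n+1}\|^2$ term for $\tau$ small, and applying discrete Grönwall then gives $\|\theta^{n+1}\|\lesssim h^2$, hence $\sup_n\|u^n_{\tau,h}-u^n_\tau\|\le\sup_n(\|\theta^n\|+\|\rho^n\|)\lesssim h^2$.

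The delicate point --- and the step I expect to be the main obstacle --- is supplying the uniform $L^\infty$-bound on $u^n_{\tau,h}$ needed for the nonlinear estimate, together with selecting the branch of solutions of the nonlinear system \eqref{FullyDiscrete} for which the error bound holds. I would handle this by a continuation/bootstrap argument in the spirit of \cite{NonlinearCN,Wang}: existence of \emph{a} solution at each step follows from a topological-degree (Brouwer) argument exploiting the mass conservation $\|u^{n+1}_{\tau,h}\|=\|u^n_{\tau,h}\|$, and to single out the relevant branch I would carry the inductive hypothesis $\|\theta^k\|\le h^{d/2}$ for $k\le n$; by the inverse estimate this yields $\|\theta^k\|_{L^\infty}\lesssim h^{-d/2}\|\theta^k\|\lesssim1$, and combined with $\|(P_h-I)u^k_\tau\|_{L^\infty}\to0$ and \eqref{bounds-semi-discrete} it gives $\|u^k_{\tau,h}\|_{L^\infty}\le M$ with $M$ independent of $\tau$ and $h$; running the degree argument on the ball $\{v\in S_h:\|v-P_hu^{n+1}_\tau\|\le h^{d/2}\}$ extends the bound to level $n+1$, so the constants in the Grönwall estimate above are universal. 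That estimate then produces $\|\theta^{n+1}\|\le C_\ast h^2$ with $C_\ast$ independent of $\tau,h,n$, and since $2-d/2>0$ for $d\in\{2,3\}$ we get $C_\ast h^2\le\tfrac12 h^{d/2}$ for $h$ small enough, which closes the induction. It is worth stressing that this threshold argument needs \emph{no} coupling condition between $\tau$ and $h$, precisely because the error $\|\theta^n\|$ that is fed back into the inverse estimate contains no power of $\tau$ and $h^{2-d/2}\to0$ in the relevant dimensions; this is exactly why the $L^2$-estimate is coupling-free.
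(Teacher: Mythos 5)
The paper does not prove Theorem \ref{main-theorem-2} itself; it is recalled from \cite[Theorem 3.1 and Lemma 5.3]{NonlinearCN}, and your sketch is essentially a faithful reconstruction of the argument given there: the same comparison with the semi-discrete solution via a Ritz-projection splitting, imaginary parts to eliminate the Hermitian terms, and the Wang-type inverse-estimate/induction argument that supplies the uniform $L^\infty$-bound on $u^n_{\tau,h}$ with no coupling between $\tau$ and $h$ because $h^{2-d/2}\to 0$ for $d=2,3$. The one step you defer without detail --- the discrete-time-derivative bound $\sum_k\tau\|D_\tau u^k_\tau\|_{H^2}^2\lesssim 1$ needed to make $\|D_\tau\rho^n\|$ of order $h^2$ rather than $h$ --- is genuinely required and is established in the cited reference by differencing \eqref{semi-disc-cnd-problem} in time and invoking elliptic regularity under the assumption $\partial_t^{(k)}u\in L^2(0,T;H^2(\mathcal{D}))$ for $k\le 2$, so your outline is correct as it stands.
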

With this we are ready to state our final theorem.
\begin{theorem}[Optimal $H^1$-error estimates for the fully discrete method] \label{MainResult2}
Let $\hatu{n}\in H^1_0(\mathcal{D})$ denote the fully-discrete Crank-Nicolson approximations from Theorem \ref{main-theorem-2}, then it holds
\begin{align*}
\sup_{0\le n \le N} \| u^n_{\tau,h} - u^n \|_{H^{1}(\mathcal{D})} \lesssim \deltat{}^2+h.
\end{align*}
\end{theorem}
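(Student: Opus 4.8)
The plan is to deduce the fully-discrete $H^1$-estimate from three facts that are already in hand: the semi-discrete $L^{\infty}(H^1)$-estimate of Theorem \ref{MainResult}, the uniform bound $\sup_{n}\|u^n_\tau\|_{H^2(\mathcal{D})}\le C(u)$ from \eqref{bounds-semi-discrete}, and the optimal, \emph{$\tau$-independent} $L^{\infty}(L^2)$-estimate $\|u^n_{\tau,h}-u^n_\tau\|\lesssim h^2$ of Theorem \ref{main-theorem-2}. The guiding observation is that, in contrast to the semi-discrete case, no new Grönwall argument is needed for the spatial part of the error: on a quasi-uniform mesh the optimal $L^2$-estimate for the finite element error can be traded for an $H^1$-estimate at the cost of one power of $h$ by an inverse estimate, and $O(h)$ is anyway the best possible $H^1$-rate for the P1 space.

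Concretely, I would insert the semi-discrete solution as an intermediate and use the splitting
\[
u^n_{\tau,h}-u^n \;=\; \underbrace{\big(u^n_{\tau,h}-P_h u^n_\tau\big)}_{=:\,\theta^n\,\in\,S_h} \;+\; \underbrace{\big(P_h u^n_\tau - u^n_\tau\big)}_{=:\,\rho^n} \;+\; \big(u^n_\tau-u^n\big),
\]
with $P_h$ the Ritz projection from \eqref{ProjectionIneq}, and bound the three pieces separately in $H^1(\mathcal{D})$. The last one is $\lesssim \tau^2$ by Theorem \ref{MainResult}. For $\rho^n$, estimate \eqref{ProjectionIneq} combined with $\|u^n_\tau\|_{H^2(\mathcal{D})}\lesssim 1$ from \eqref{bounds-semi-discrete} gives $\|\rho^n\|_{H^1(\mathcal{D})}\lesssim h\,\|u^n_\tau\|_{H^2(\mathcal{D})}\lesssim h$. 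For $\theta^n$, which lies in $S_h$, the inverse estimate on the quasi-uniform triangulation yields $\|\theta^n\|_{H^1(\mathcal{D})}\lesssim h^{-1}\|\theta^n\|$, while $\|\theta^n\|\le\|u^n_{\tau,h}-u^n_\tau\|+\|u^n_\tau-P_hu^n_\tau\|\lesssim h^2$ by Theorem \ref{main-theorem-2} and \eqref{ProjectionIneq}; hence $\|\theta^n\|_{H^1(\mathcal{D})}\lesssim h$. Adding the three contributions and taking the supremum over $n$ gives exactly $\sup_{n}\|u^n_{\tau,h}-u^n\|_{H^1(\mathcal{D})}\lesssim \tau^2+h$.

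Since all the analytical difficulty has been absorbed into the cited theorems, there is no real obstacle left; the points that need care are rather of a bookkeeping nature. The first, and most important, is that it is essential to route the estimate through $u^n_\tau$: only the $\tau$-free bound $\|u^n_{\tau,h}-u^n_\tau\|\lesssim h^2$ can absorb the factor $h^{-1}$ coming from the inverse estimate without generating a coupling condition, whereas applying the inverse estimate to $u^n_{\tau,h}-u^n$ directly would produce the unwanted term $\tau^2/h$ and force $\tau\lesssim h$. The second is that the inverse estimate needs the quasi-uniformity of the mesh (part of our standing assumptions) and that the implicit smallness requirement on $h$ behind Theorem \ref{main-theorem-2} is inherited here. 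Finally, I would note that the loss of one order of $h$ relative to the $L^2$-rate is intrinsic to this argument but harmless, since $\sup_n\|u^n_{\tau,h}-u^n\|_{H^1(\mathcal{D})}\lesssim\tau^2+h$ already realizes the optimal approximation order of the P1 finite element space.
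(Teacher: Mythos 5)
Your proposal is correct and follows essentially the same route as the paper: the identical three-term splitting through $P_h u^n_\tau$, the inverse estimate applied to the discrete part $u^n_{\tau,h}-P_h u^n_\tau$, the $\tau$-independent bound $\|u^n_{\tau,h}-u^n_\tau\|\lesssim h^2$ from Theorem \ref{main-theorem-2} to absorb the $h^{-1}$, and the uniform $H^2$-bound \eqref{bounds-semi-discrete} for the projection error. Your remark on why one must route through $u^n_\tau$ rather than $u^n$ to avoid a $\tau^2/h$ coupling term is exactly the point of the paper's argument.
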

\begin{proof}
First, we recall the inverse estimate on quasi-uniform meshes (cf. \cite{BrS08}), i.e. $\| \nabla v_h \| \le C h^{-1} \| v_h \|$ for all $v_h \in S_h$, which implies
\begin{align}
\label{inverse-estimate} \| \nabla(P_h(u_\tau^n)-u_{\tau,h}^n) \| \le C h^{-1} \| P_h(u_\tau^n)-u_{\tau,h}^n \|.
\end{align}
With this, the $H^1$ convergence result \eqref{Convergence} together with Theorem \ref{main-theorem-2}  suffice to show optimal $H^1$-convergence rates for the fully discrete method. This is made clear by the following splitting.
\begin{eqnarray*}
\|\nabla (u^n-u_{\tau,h}^n)\| &\leq& \|\nabla(u^n-u_\tau^n)\|+\|\nabla(u_\tau^n-P_h(u_\tau^n))\|+\|\nabla(P_h(u_\tau^n)-u_{\tau,h}^n)\| \\
&\overset{\eqref{inverse-estimate}}{\leq}& \|\nabla(u^n-u_\tau^n)\| + Ch+Ch.
\end{eqnarray*}
Here we have made use of the inequality \eqref{ProjectionIneq}, the uniform $H^2$-regularity of $u^n_\tau$, i.e. $\|u^n_\tau\|_{H^2} \leq C(u)$ (cf. \eqref{bounds-semi-discrete}) and the optimal $L^2$-estimates. In virtue of Theorem  \ref{MainResult} we may thus conclude:
\begin{equation}
\| \nabla (u^n-u^n_{\tau,h}) \| \leq C(\tau^2+h).
\end{equation}
\end{proof}
Detailed numerical studies that confirm the optimal convergence rates stated in Theorem \ref{main-theorem-2} and Theorem \ref{MainResult2} are presented in \cite{NonlinearCN,HW18}.

	\section{Implementation and Numerical Examples}
		
In this section we will discuss how the Crank-Nicolson FEM discretization can be efficiently implemented and practically used. Afterwards, we present two numerical experiments. The first one is to confirm the theoretically predicted convergence rates in Theorem \ref{main-theorem-2} and the second experiment demonstrates that our approach is fully competitive in low regularity regimes, where we compare it with a time-splitting spectral method.

\subsection{Efficient implementation}

	The Crank Nicolson method \eqref{FullyDiscrete}, albeit popular, suffers from the drawback that it requires solving a fully nonlinear system of equations in each time step. Furthermore, this system of equations is often solved through a Newton step, the implementation of which can become complicated and expensive for general nonlinearities. We present here a competitive fixed point solver which makes the method perform on par in terms of computational time with linearized time-stepping methods such as the RE-FEM proposed by C. Besse \cite{Besse} which was found to be best performing in \cite{HW18}.  
	
	To detail the proposed fixed-point iteration, let $U^n \in \mathbb{R}^N$ denote the vector of nodal values that belongs to the function $u^n_{\tau,h} \in S_h$. Introducing the following matrix notation:
	\begin{eqnarray*}
		&(M)_{ij} = \langle v_j , v_i \rangle &\quad  A_{ij} = \langle \nabla v_j , \nabla v_i \rangle \\
		&(M_V)_{ij} = \langle V v_j , v_i \rangle &\quad (M_\Gamma) _{ij}(U^{n+1},U^n) = \langle \tfrac{\Gamma(|U^{n+1}|^2) - \Gamma(|U^{n}|^2)}{|U^{n+1}|^2 - |U^{n}|^2} v_j , v_i \rangle
	\end{eqnarray*}
the equation \eqref{FullyDiscrete} in matrix form becomes :
	\begin{align*}
	\ci M\frac{U^{n+1}-U^n}{\tau} =\big( A+M_V+M_\Gamma(U^{n+1},U^n) \big)\frac{U^{n+1}+U^n}{2}.
	\end{align*}
	Let $L_1 = M+\ci\tau/2(A+M_V)$ and $L_2 = M-\ci \tau/2(A+M_V)$. Our fixed point iteration takes the form:
	\begin{equation}
		U^{n+1}_{i+1} = L_1^{-1}L_2U^n 
		- \ci\tau L_1^{-1}M_\Gamma (U^{n+1}_i,U^n)(U^{n+1}_i+U^n)/2.
	\end{equation} 
		Here we note that matrix $L_1$ does not change with time. Hence, the above iteration can be done efficiently by precomputing the LU-factorization of $L_1$. After it is precomputed, each time step only involves matrix-vector multiplications, but no longer the solving of a linear system of equations. In fact, the main cost in each time step account for the assembly of the updated mass matrix $M_\Gamma (U^{n+1}_i,U^n)$ with the densities from the previous time step and the previous iteration. Compared to this, all other costs are essentially negligible. We find that typically, but dependent on $\tau$, 4-8 iterations are required to reach a tolerance of machine epsilon.  To illustrate the efficiency we conclude with two numerical test problems. 

\subsection{Harmonic potential}
 First we consider a smooth potential to confirm the expected convergences rates for a type of nonlinearity that complements previous test cases \cite{NonlinearCN,HW18}. Here we seek $u(x,t)$ with
		\begin{gather}  \label{Saturated_Smooth}
		\begin{cases}
		\ci \partial_t u  &= -\Delta u + V u  +  \gamma(|u|^2)u   \qquad\hspace{12pt} \mbox{in } \mathcal{D} \times (0,T], \\
		u(\cdot,t)  &=  0     \qquad\hspace{111pt} \mbox{on } \partial \mathcal{D} \times (0,T],  \\
		u(\cdot,0)  &= \uzero  \qquad\hspace{105pt} \mbox{in }  \mathcal{D},
		\end{cases}
		\end{gather}
		where we consider the saturated nonlinearity $\gamma(r) := r/(1+r)$ (cf. \cite{Max76,MNF89,RTZ00}) . Furthermore, $\mathcal{D} = [-5,5]^2$ is the computational domain, the maximum time is selected as $T = 1$ and the trapping potential $V(x,y)  = (\nu_x x)^2+(\nu_y y)^2$.
		For the time-dependent problem we set the trapping frequencies to $\nu_x = 2$ and $\nu_y = 3$. The initial value $\uzero$ is the unique positive ground state with $\int_{\mathcal{D}} |\uzero|^2 =1$ to the problem with $\nu_x=\nu_y = 1$, i.e. it solves the eigenvalue problem
		\[\lambda_0 \uzero = -\Delta \uzero +V\uzero +\gamma(|\uzero|^2)\uzero, \]
		with ground state eigenvalue (chemical potential) $\lambda_0>0$.
	The $H^1$-errors are presented in Table \ref{H1-Errors}. The $\mathcal{O}(h)$-convergence is best seen in column $\tau=2^{-9}$, where initially the convergence is $\mathcal{O}(h^{1.5})$ but flattens out to $\mathcal{O}(h^{1.2})$ for the last data point. Since the reference solution is also computed with $h=0.0125$, it is expected that this last order of convergence is an overestimate. Using the values in row $h=0.0125$ we estimate the order of convergence with respect to $\tau$ to be $1.9$, hence, confirming the theoretically predicted rates from Theorem \ref{main-theorem-2}.
		\begin{table}[H]
		\begin{center}{$\|\nabla(u_{\tau,h}-u_{\text{ref}})\|_{L^2}$ }\\
		\begin{tabular}{l| l*{6}{c}}
             & $\tau = 2^{-5}$ & $\tau = 2^{-6}$ & $\tau = 2^{-7}$ & $\tau = 2^{-8}$ & $\tau = 2^{-9}$  & $\tau = 2^{-10}$ & $\tau = 2^{-11} $  \\
\hline
$h = 0.2$ & 0.829 & 0.400 & 0.485 & 0.526 & 0.538 & 0.542 & 0.543 \\
$h = 0.1$ & 1.048 & 0.396 & 0.144 & 0.179 & 0.191 & 0.194 & 0.194  \\
$h=0.05$  & 1.109 & 0.498 & 0.129 & 0.054 &  0.062 & 0.066 & 0.066 \\
$h = 0.025$  &  1.124 & 0.526 & 0.155 & 0.037 &  0.022 & 0.023 &  0.023  \\
$h=0.0125$  & 1.128  & 0.533 & 0.163 & 0.039 &  0.009 & 0.002 &\ 0.0005  \\
\end{tabular}
\end{center}
\caption{$H^1$-errors for harmonic potential test case \eqref{Saturated_Smooth}. The reference solution $u_{\text{ref}}$ is calculated with $h = 0.0125$ and $\tau=2^{-13}$. The energy is $E[u^n_{h,\tau}] = 3.86874$.} \label{H1-Errors}
\end{table}

\subsection{Discontinuous potential}
This example illustrates a moderate setting where, due to reduced regularity of the exact solution, finite element based methods are preferable over spectral methods. In the following we compare the Crank-Nicolson approach with a Strang splitting spectral method of order 2 (SP2) \cite{Spectral} which is known to show a very good performance in smooth settings.

In this test problem we seek $u(x,t)$ with
		\begin{gather}  \label{Saturated_Discont}
		\begin{cases}
		\ci \partial_t u  &= -\Delta u + V u  +  \gamma(|u|^2)u   \qquad\hspace{13pt} \mbox{in } \mathcal{D} \times (0,T], \\
		u(\cdot,0)  &= \uzero  \qquad\hspace{106pt} \mbox{in }  \mathcal{D},
		\end{cases}
		\end{gather}
		where we consider the saturated nonlinearity $\gamma(r) := 10r/(1+r)$. For a fair comparison with the SP2, we consider our problem with periodic boundary conditions which are easier to handle by the spectral method. The generalization of the Crank-Nicolson method to periodic boundary conditions is straightforward. Furthermore, $\mathcal{D} = [-5,5]^2$ is again the computational domain and the maximum time is selected as $T = 1$. The trapping potential $V(x,y)  = (\nu_x x)^2+(\nu_y y)^2+100(\mathds{1}_{|x|\geq 1}(x)+\mathds{1}_{|y|\geq 1}(y))$, with trapping frequencies $\nu_x = 1$ and $\nu_y = 3$, is discontinuous and causes a slight loss of regularity. We stress that this is a moderate test case, as illustrated in Fig. \ref{Density} most of the dynamics take place within the unit cube where the potential is smooth.  The initial value $\uzero$ is the unique positive ground state with $\int_{\mathcal{D}} |\uzero|^2 =1$ and $V_0(x,y)  = x^2+ y^2$, i.e. it solves the eigenvalue problem
		\[\lambda_0 \uzero = -\Delta \uzero +V_0\uzero +\gamma(|\uzero|^2)\uzero. \]

The errors and the computational times of the CN-FEM are presented in Table \ref{CN-FEM} and the errors and computational times of the SP2 in Table \ref{SP2}. The reference solution, $u_{\text{ref}}$, is computed using the CN-FEM with $h = 0.0125$ and $\tau = 2^{-13} $. The implantation was done in \verb|Julia|. It is important to keep in mind that the SP2 uses mostly inbuilt functions such as the fast Fourier transform from the \verb|C| subroutine library (\verb|FFTW|). These functions are heavily optimized and show an extremely good performance.
In spite of this we see,  comparing the errors of the CN-FEM for $h = 0.025$ and $\tau= 2^{-10}$ to those of the SP2 for $N_{DoF} = 3.2\cdot 10^{6}$ and $\tau = 2^{-13}$, that they are on par with respect to CPU time relative to accuracy, with a slight computational advantage for the CN-FEM. This advantage becomes clearer, the larger the region of reduced regularity (e.g. in the context of optical lattices or disorder potentials). This justifies the usage of CN-FEM in low regularity regimes. Furthermore, it is clearly seen that the space discretization dominates the error of the SP2, in order for the spectral method to catch up in terms of accuracy with the CN-FEM with $h=0.0125$, an estimated 10 to 40 million degrees of freedom would be needed and thus the memory cost would become an issue.

	\begin{figure}[H]
	\centering
\includegraphics[width=0.7\linewidth]{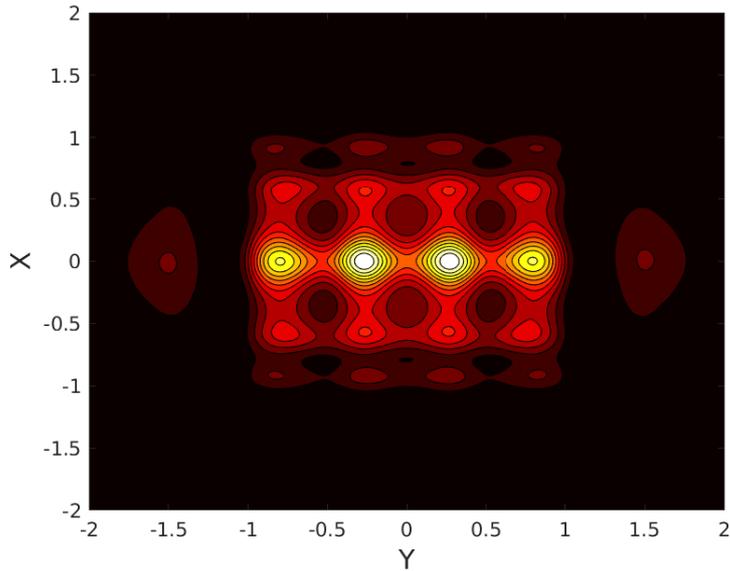}	
\caption{Plot of the density ($|u|^2$) of the reference solution to \eqref{Saturated_Discont} at $T=1$. }\label{Density}
\end{figure}

	\begin{table}[H]
		\centering
		\begin{tabular}{|c |c| c | c|  }
			\multicolumn{4}{c}{CN-FEM, $h=0.025$, $N_{DoF} = 400^2$}\\ \hline
			 & $\||u_{\tau,h}|^2-|u_{\text{ref}}|^2 \|_{L^1}$ & $\|\nabla (u_{h,\tau} -u_{\text{ref}}) \|_{L^2} $ & CPU [h]\\
			 \hline
			 $\tau = 2^{-8}$& 0.40 & 4.91 & 0.4 \\
			 $\tau = 2^{-9}$& 0.18 & 3.20 & 0.8 \\
			 $\tau = 2^{-10}$& 0.12 & 1.43 & 1.4 \\ \hline
			 
					\end{tabular}

		\begin{tabular}{|c |c| c | c|  }
			\multicolumn{4}{c}{CN-FEM, $h=0.0125$, $N_{DoF} = 800^2 $}\\ \hline
			& $\||u_{\tau,h}|^2-|u_{\text{ref}}|^2 \|_{L^1}$ & $\|\nabla (u_{h,\tau} -u_{\text{ref}}) \|_{L^2} $ & CPU [h]\\
			\hline
			$\tau = 2^{-8}$& 0.42 & 5.38 & 1.8 \\
			$\tau = 2^{-9}$& 0.22 & 3.61 & 3.3 \\
			$\tau = 2^{-10}$& 0.13 & 1.77 & 6.0 \\ 
			$\tau = 2^{-11}$& 0.04 & 0.80 & 11.5 \\ 
			\hline
		\end{tabular}
		\caption{Errors and computational times for the CN-FEM. For a relative comparison of the errors we recall $\|\ |u_{\tau,h}|^2 \ \|_{L^1}=1$ and note $E[u_{\tau,h}] = 25.539397$.} \label{CN-FEM}
	\end{table}

	\begin{table}[H]
	\centering
	\begin{tabular}{|c |c| c | c|  }
		\multicolumn{4}{c}{SP2, $N_{DoF}=800^2$}\\ \hline
		& $\||u_{\tau,h}|^2-|u_{\text{ref}}|^2 \|_{L^1}$ & $\|\nabla (u_{h,\tau} -u_{\text{ref}}) \|_{L^2} $ & CPU [h]\\
		\hline
		$\tau = 2^{-12}$& 0.28 & 4.29 & 0.24  \\
		$\tau = 2^{-13}$& 0.28 & 2.42  & 0.49  \\
		$\tau = 2^{-14}$& 0.28 & 2.35 & 0.96 \\ 
		\hline
	\end{tabular}

	\begin{tabular}{|c |c| c | c|  }
		\multicolumn{4}{c}{SP2, $N_{DoF}=1600^2$}\\ \hline
		& $\||u_{\tau,h}|^2-|u_{\text{ref}}|^2 \|_{L^1}$ & $\|\nabla (u_{h,\tau} -u_{\text{ref}}) \|_{L^2} $ & CPU [h]\\
		\hline
		$\tau = 2^{-12}$& 0.12 & 2.62 &  1.15\\
		$\tau = 2^{-13}$& 0.12 & 1.28 &  2.26 \\
		$\tau = 2^{-14}$& 0.12 & 1.18 &  5.2 \\ 
		\hline
	\end{tabular}
	\caption{Errors and computational times for the SP2. For a relative comparison of the errors we recall $\|\ |u_{\tau,h}|^2 \ \|_{L^1}=1$ and note $E[u_{\tau,h}] = 25.539397$.} \label{SP2}
\end{table}

\medskip
$\\$
{\bf Acknowledgements.}
We thank the anonymous referees for their helpful and insightful comments that improved the contents of this paper.

\appendix

\section{Explicit decomposition of the consistency error}\label{consistency-error-appendix}
Here we make the consistency error \eqref{Taylor} and its estimate \eqref{bounds-for-Tk} explicit and highlight where the regularity assumptions come in. For this we use the following standard integral remainder of Taylor expansion (for Sobolev functions $v\in H^{n+1}(a,b)$):
\begin{eqnarray*}
v(b)-T_n^{a,b}(v)  = \int_a^b\frac{(b-t)^n}{n!}v^{(n+1)}(t)dt ,
\end{eqnarray*}
where $a$ is the point of expansion and $T_n^{a,b}(v)$ the Taylor polynomial of degree $n$ of $v$ evaluated in $a$ and $b$. Recalling that $u^{n+1/2}=(u^{n+1} + u^n)/2$, the Taylor expansion implies in our case:

\begin{eqnarray*}
&&\| V(u^{n+1/2}-u(t_{n+1/2}))\|^2 = \frac{1}{2} \| V\big( \int_{t_{n+1/2}}^{t_{n+1}}(t_{n+1}-s)\partial_{tt}u(s)ds 
-\int_{t_{n+1/2}}^{t_{n}}(t_{n}-s)\partial_{tt}u(s)ds\big)\|^2\\ 
&&  \lesssim  (\int_{t_n}^{t_{n+1}}(t_{n+1}-s)\|\partial_{tt}u(s)\|ds\big)^2 \lesssim 
\big(\tau^{3/2} \|\partial_{tt}u(s)\|_{L^2( (t_n,t_{n+1}); L^2(\mathcal{D}))}\big)^2 \lesssim \tau^3 \int_{t_n}^{t_{n+1}}\|\partial_{tt}u\|^2 ds
\end{eqnarray*}
Thus \[ \sum_{k=1}^{n}\|u^{n+1/2}-u(t_{n+1/2})\|^2 \leq \tau^3 \int_0^T \|\partial_{tt}u(s)\|^2 ds \leq \tau^3 \|\partial_{tt}u\|^2_{L^2((0,T);L^2(\mathcal{D}) )}.  \]
Likewise we have
\begin{eqnarray*}
&&\|\ci \hspace{2pt}( D_\tau u^n - \partial_t u(t_{n+1/2}))\|^2 =\frac{1}{\tau^2} \| \int_{t_{n+1/2}}^{t_{n+1}}\frac{(t_{n+1}-s)^2}{2}\partial_{ttt}u(s)ds  - \int_{t_{n+1/2}}^{t_{n}}\frac{(t_{n}-s)^2}{2}\partial_{ttt}u(s)ds\|^2 \\
&& \lesssim \frac{1}{\tau^2} \tau^5 \int_{t_n}^{t_{n+1}} \|\partial_{ttt}u(s)\|^2ds 
\end{eqnarray*}
and
\begin{eqnarray*}
&& \|\Delta( u^{n+1/2}-u(t_{n+1/2}))\|^2 = \|\int_{t_{n+1/2}}^{t_{n+1}}\frac{(t_{n+1}-s)^2}{2} \Delta \partial_{tt}u(s) ds-\int_{t_{n+1/2}}^{t_{n}}\frac{(t_{n}-s)^2}{2} \Delta \partial_{tt}u(s) ds \|^2 \\
&&  \lesssim \tau^3 \int_{t_n}^{t_{n+1}}\| \Delta \partial_{tt} u(s)\|^2 ds.
\end{eqnarray*}
For the estimate of the term coming from the nonlinearity, we set for the sake of brevity $a:=|u^n|^2$, $b:= |u^{n+1}|^2$ and $c:=|u(t_{n+1/2})|^2$ to obtain
\begin{eqnarray*}
&&\|\frac{1}{|u^{n+1}|^2-|u^n|^2}\int_{|u^n|^2}^{|u^{n+1}|^2}\gamma(r)dr-\gamma(|u(t_{n+1/2})|^2 \|^2) =  \|\frac{1}{b-a}\int_a^b \gamma(r)dr -\gamma(c)\|^2 \\
&& = \|\frac{1}{b-a}\int_a^b \int^r_c \gamma'(s)dsdr\|^2 = \|\gamma'(c')\bigg(\frac{b+a}{2}-c\bigg)\|^2 \\ 
&& \lesssim \tau^3\| \partial_{tt}|u|^2 \|_{L^2((t_n,t_{n+1});L^2)}^2   \lesssim \tau^3 \| \partial_{tt} u \|_{L^2((t_n,t_{n+1});L^2)}^2.
\end{eqnarray*}
Where $c'$ lies between $a$ and $b$. Thus \[ 
\sum_{k=0}^n\|T^k\|^2 \lesssim \tau^3 (\|\partial_{ttt}u \|_{L^2((0,T);L^2 )}^2+\|\partial_{tt}u\|_{L^2((0,T);H^2)}^2+\|\partial_{t}u\|_{L^2((0,T);H^2)}^2+\|u\|_{L^2((0,T);H^2)}^2), \]
which proves \eqref{bounds-for-Tk}. Finally we use the above expression for $\gamma(\xi^n)-\gamma(|u(t_{n+1/2})|^2)$, to bound   $\|\nabla(\gamma(\xi^k)-\gamma(|u(t_{k+1/2})|^2))\|^2$,
\begin{eqnarray*}
\|\nabla(\gamma(\xi^n)-\gamma(|u(t_{n+1/2})|^2))\|^2&=&\|\gamma''(c')\nabla c'(\frac{b+a}{2}-c)+\gamma'(c')\nabla(\frac{b+a}{2}-c)\|^2\\
& \lesssim&  \| \frac{|u^{n+1}|^2+|u^n|^2}{2}-|u(t_{n+1/2})|^2\|_{H^1}^2.
\end{eqnarray*}

\end{document}